  \def\Z{Z}%
  \def\displaystyle{}%
  \def\lim{\text{lim}}%
  \def\to{\textrightarrow}%
  \def\infty{\text{∞}}%
  \def\frac#1#2{(#1/#2)}%
  \def\(#1\){#1}%
\theoremstyle{plain} 
\newtheorem{theorem}{Theorem}[section]
\newtheorem{corollary}[theorem]{Corollary}
\newtheorem{proposition}[theorem]{Proposition}
\newtheorem{lemma}[theorem]{Lemma}
\newtheorem{definition}[theorem]{Definition}
\newtheorem{example}[theorem]{Example}
\newtheorem{remark}[theorem]{Remark}
\newcommand{\E}{\mathbb{E}} 
\newcommand{\Prob}{\mathbb{P}} 
\newcommand{\N}{\mathbb{N}} 
\newcommand{\Z}{\mathbb{Z}} 
\newcommand{\R}{\mathbb{R}} 
\newcommand{\eps}{\varepsilon} 
\title{Criticality of the abelian sandpile in dimension 1}
\author{Adrien REZZOUK\thanks{École normale supérieure, PSL University, 75005 Paris, France. 
Email: adrien.rezzouk@ens.psl.eu}}
\date{\today}
\begin{document}

\maketitle

\begin{abstract}
We study the critical behavior of the dissipative abelian sandpile model on  $\Z$ with dissipative sites at arbitrary positions $(x_k)$. This is equivalent to studying whether the expected stopping time of a trapped random walk on $\Z$ is finite. 
Our main contribution is a precise description of this phase transition via three distinct regimes.
Our analysis captures criticality through the asymptotic behavior of an explicit recursive sequence, revealing counterintuitive phenomena in which traps may be farther apart yet the stopping time becomes shorter.
\end{abstract}

\section{Introduction}

A walker wanders on $\Z$ but deadly traps are dispersed at fixed sites $x_k \in \Z$. When he encounters a trap, he may escape with probability $2/3$; otherwise his journey ends. How long can he survive? This question is equivalent to determining whether the Abelian sandpile model~—~a toy model introduced by physicists Per Bak, Chao Tang, and Kurt Wiesenfeld in \cite{BTW} and later generalized by Deepak Dhar in \cite{Dhar_1999}~—~exhibits critical behavior.

More specifically,  we consider the Abelian sandpile model on the graph \(\Z\), where certain sites, called dissipative or trap vertices are located at positions \(x_k \in \Z\). Each such site is connected to an additional vertex — the sink — through a third edge, allowing grains of sand to be lost. These trap vertices thus have degree 3. Whenever the height at such a vertex exceeds 3, it topples by sending one grain to the left, one to the right, and one into the sink. The dynamics continues until all vertices are stable, which occurs when the system is confined to a finite volume, e.g., \(\llbracket -n, n \rrbracket\), and the boundary sites are connected to the sink.

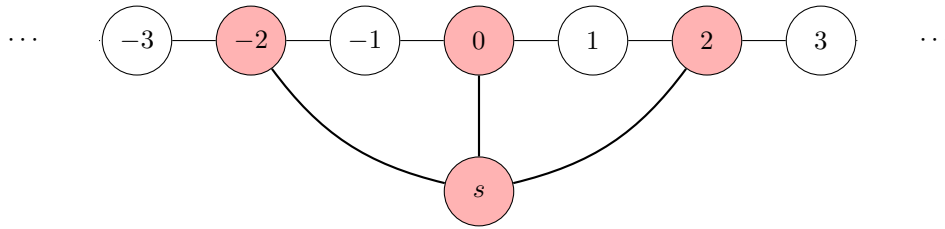
\begin{figure}[ht]
        \centering
        \begin{tikzpicture}[scale=1,
            every node/.style={circle, draw, minimum size=26pt, font=\small},
            dissip/.style={fill=red!30},
            normal/.style={fill=white}
            ]

            \foreach \i/\lab in {-3/, -2/, -1/, 0/, 1/, 2/, 3/} {
                \ifnum\i=-2 \def\sty{dissip}\else
                \ifnum\i=0  \def\sty{dissip}\else
                \ifnum\i=2  \def\sty{dissip}\else
                \def\sty{normal}
                \fi
                \fi
                \fi
                \node[\sty] (z\i) at (\i*1.5,0) {\lab  $\i$};
            }

            \node[dissip] (p) at (0, -2) {$s$};

            \foreach \i in {-3,-2,-1,0,1,2} {
                \pgfmathtruncatemacro{\j}{\i + 1}
                \draw[-] (z\i) -- (z\j);
            }


            \draw[thick] (z-2) to[bend right=20] (p);
            \draw[thick] (z0) -- (p);
            \draw[thick] (z2) to[bend left=20] (p);

            \node[draw=none] at (-6,0) {\(\cdots\)};
            \node[draw=none] at (6,0)  {\(\cdots\)};
            \draw[dotted] (-5,0) -- (z-3);
            \draw[dotted] (z3) -- (5,0);


        \end{tikzpicture}

        \caption{Example with traps in \(x_{-1} = -2\), \(x_0 = 0\) et \(x_1 = 2\)}

    \end{figure}

By adding grains of sand at uniformly chosen sites, one defines a Markov chain on the set of height configurations \(\eta\) over the graph. In finite volume, this chain admits a unique invariant measure, denoted \(\mu_n\) for the volume $\llbracket -n,n\rrbracket$, which is the uniform distribution over the set of recurrent configurations. For a concise yet precise introduction to the Abelian sandpile model, see, for instance~\cite{mathematicalintroduction}; for a more comprehensive treatment covering additional aspects, see~\cite{jaraisandpilemodel}.

The dissipative version of the Abelian sandpile model is analyzed in \cite{Redig2018-gl}, where the authors introduce a notion of criticality on \(\Z^d\). For a given configuration \(\eta\) and site \(x\), let \(Av(x, \eta)\) denote the number of sites involved in the avalanche triggered by adding a grain at \(x\). The model is said to be non-critical on $\Z$ if
\[
\forall x \in \Z, \quad \limsup_{n \to \infty} \E_{\mu_n}[Av(x, \eta)] < \infty.
\]

The transition between critical and non-critical behavior in one dimension is further explored in \cite{redig2025randomwalksfieldsoft}. There, the authors use the characterization of the criticality of the model in terms of the divergence of a stopping time associated with a trapped random walk on~\(\Z\). Their analysis focuses on the specific case where the traps are symetrically distributed and the distances between traps, denoted \(\lvert I_k \rvert = x_k - x_{k-1}\), satisfy the recursive relation \(\lvert I_{k+1} \rvert = c \lvert I_k\rvert^2 \) for $k\geq0$. They show that the model exhibits critical behavior if and only if \(c \geq 1\) by deriving bounds the expected stopping time.

In this work, we consider arbitrary sequences \((x_k)\) of trap positions and compute the expectation of the above stopping time of the random walk. We first reduce the problem of criticality to the analysis of the trapped random walk on the half-line $ \mathbb{Z}_{>0}$, where the traps are located at positions $x_k > 0$.
Our analysis reveals several phenomena that challenge intuition. First, while one might expect that spacing traps further apart should always increase escape times, we discover situations where \emph{doubling all trap distances can turn an infinite expected stopping time into a finite one}. 

More fundamentally, we identify three distinct regimes separated by two natural thresholds. The first threshold appears at the geometric sequence \(\left(\frac{3+\sqrt{5}}{2}\right)^k\) — the square of the golden ratio — below which all trap configurations yield non-critical behavior. Above this lies an intermediate regime where critical and non-critical systems coexist, with a decisive criterion being whether the ratio \(x_{k+1}/x_k^2\) consistently stays above or below 1 (which also establishes that recursive constructions such as \(\lvert I_{k+1} \rvert = c \lvert I_k\rvert^2\), studied in \cite{redig2025randomwalksfieldsoft}, capture this transition).
Finally, any sequence that eventually dominates all double-exponentials \(a^{2^k}\) with \(a > 1\) necessarily produces critical behavior. To do so, we restrict the walk to a finite interval \([-x_n, x_n]\) and compute the expected stopping time of this truncated walk. We then analyze its behavior as \(n \to \infty\) in order to determine whether the expected stopping time diverges, which corresponds to the critical regime.

The paper is organized as follows. In Section~\ref{sec:mainresults}, we state our main results, and give various examples to illustrate the possible cases. Then, we begin by formulating the problem in terms of the Green function of the graph in Section~\ref{sec:walk}. We then prove that the criticality of the Abelian sandpile model is equivalent to the divergence of the expected value \(\E_x(T)\), where \(T\) denotes the stopping time of a random walk on \(\Z\) in which the walker, when located at a trap site $x_k$, jumps with equal probability to the left, to the right, or into the sink.

In Section~\ref{sec:framework}, we derive the main equation on $\E_x(T)$ that will be solved in Section~\ref{sec:q_n/p_n}. We also prove that the criticality of the model is preserved under modifications of finitely many traps, and we show how the study of the walk on $\mathbb{Z}$ reduces to the study of the walks on the half-lines $\mathbb{Z}_{<0}$ and $\mathbb{Z}_{>0}$.

Finally, in Section~\ref{sec:q_n/p_n}, we express the criticality in terms of the limit of an explicit sequence, establishing this three-regime structure and providing practical criteria to determine the critical nature of many trap configurations.
\section{Main results}
\label{sec:mainresults}
All the results in this section are proved in sections~\ref{sec:walk},~\ref{sec:framework} and~\ref{sec:q_n/p_n}.
As mentioned earlier, the problem can be reformulated as a random walk on $\mathbb{Z}$.  
Let $D = \{x_k\}_{k \in \mathbb{Z}}$ denote the set of \emph{traps}, and introduce a special absorbing state, the \emph{sink} $s$. Once the walker reaches $s$, it cannot leave; equivalently, the walk terminates.  

The transition probabilities are given by, for $x \in \Z$, $y \in \Z\cup\{s\}$  
\begin{equation}
\label{eq:P(x,y)}
P(x, y) =
\begin{cases}
0 & \text{if } x = y, \\
\frac{1}{2} & \text{if } x \sim y \ \text{and} \ x \notin D, \\
\frac{1}{3} & \text{if } x \sim y \ \text{and} \ x \in D,
\end{cases}
\end{equation}
where $x \sim y$ means that $y$ is a nearest neighbor of $x$, i.e., 
\[
y = x+1, \quad y = x-1, \quad \text{or} \quad y = s \ \text{if} \ x \in D.
\]

Thus, if $x \notin D$, the walker moves to the left or to the right with probability $1/2$ each;  
if $x \in D$, it moves left, right, or into the sink $s$ with probability $1/3$ each.

Let $T$ be the stopping time of the walk, i.e., the smallest time at which the walker reaches the sink $s$. Denote by $\mathbb{E}_x(T)$ the expected stopping time when starting from site $x$.  
We first establish that the model is critical in the sense of \cite{Redig2018-gl} precisely when the expected stopping time of the associated random walk is infinite:
\begin{theorem}
\label{thm:equivalencewalk}
The model is critical if and only if, for every $x \in \mathbb{Z}$, $\mathbb{E}_x(T) = \infty$,  
which is equivalent to the existence of some $x \in \mathbb{Z}$ such that $\mathbb{E}_x(T) = \infty$.
\end{theorem}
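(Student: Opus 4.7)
The plan is to establish the equivalence through the Dhar identity, pass to the thermodynamic limit via monotone convergence, and finally use irreducibility of the trapped walk to propagate the infiniteness of $\E_x[T]$ from one site to all sites.

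\textbf{Step 1.} First, I would work in the finite volume $\llbracket -n, n\rrbracket$, where the boundary sites $\pm n$ are additionally connected to the sink and thus act as traps, both for the toppling rule and for the associated random walk with transitions \eqref{eq:P(x,y)}. The abelian property of the sandpile yields Dhar's identity: for every $x, y$, the expected number of topplings at $y$ under $\mu_n$ when a grain is added at $x$ equals the Green function $G_n(x,y)$ of the trapped walk killed at the sink (see, e.g., \cite{mathematicalintroduction}). Summing over $y$,
\[
\E_{\mu_n}[\mathrm{Av}(x,\eta)] \;=\; \sum_{y\in \llbracket -n, n\rrbracket} G_n(x,y) \;=\; \E_x[T_n],
\]
where $T_n$ denotes the stopping time of the walk confined to $\llbracket -n, n\rrbracket$.

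\textbf{Step 2.} Next, I would pass to the limit $n \to \infty$. The only difference between the finite and infinite volume walks lies at $\pm n$: in the finite walk, these sites carry an extra $1/3$ killing channel. Deleting this extra killing can only increase survival, so $G_n(x,y)$ is nondecreasing in $n$ and converges to the Green function $G(x,y)$ of the trapped walk on $\Z$. By monotone convergence,
\[
\limsup_{n} \E_{\mu_n}[\mathrm{Av}(x,\eta)] \;=\; \lim_{n} \E_x[T_n] \;=\; \sum_y G(x,y) \;=\; \E_x[T].
\]
Hence non-criticality of the model at $x$ is equivalent to $\E_x[T] < \infty$, which already yields the equivalence between criticality and the existence of some $x$ with $\E_x[T] = \infty$.

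\textbf{Step 3.} Finally, assuming $\E_{x_0}[T] = \infty$ for some $x_0 \in \Z$ and fixing $y \in \Z$, I would argue that the trapped walk is irreducible on $\Z$ before absorption: one can exhibit a concrete finite path from $y$ to $x_0$ avoiding the sink, whose probability is strictly positive. Writing $\tau_{x_0}$ for the hitting time of $x_0$, this gives $\Prob_y(\tau_{x_0} < T) > 0$, and the strong Markov property at $\tau_{x_0}$ yields $\E_y[T] \geq \Prob_y(\tau_{x_0} < T)\, \E_{x_0}[T] = \infty$, promoting the property from one site to all of $\Z$.

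The hard part will be Step 1: one must carefully encode the degree-3 dissipative sites so that the toppling computation reproduces exactly the kernel \eqref{eq:P(x,y)}, including the $1/3$ killing probability per visit to a trap, and handle the extra boundary traps $\pm n$ consistently with the infinite-volume dynamics. Once this identification is in place, Steps 2 and 3 reduce to standard applications of monotone convergence and the strong Markov property.
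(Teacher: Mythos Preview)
Your proposal is essentially the paper's own argument: Section~\ref{sec:walk} identifies $\sum_y G_n(x,y)$ with the expected stopping time up to degree weights (the paper obtains a sandwich $\tfrac13\,\E_x[T_n]\le \sum_y G_n(x,y)\le \tfrac12\,\E_x[T_n]$ rather than your exact equality, since $G_n(x,y)=Q_n(x,y)/\deg(y)$, but this is immaterial for the divergence question) and passes to the limit by the same monotonicity coupling you describe. Section~\ref{subsec:finiteD} then propagates finiteness of $E(x)=\E_x[T]$ from one site to all sites via the one-step recurrence~\eqref{eq:systemE(x)}, which is exactly your irreducibility/strong-Markov Step~3 in nearest-neighbor form.
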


To compute the expected stopping time, we first study the trapped random walk on the bounded domain $V_n = \llbracket -n, n \rrbracket$.
\begin{definition}
\label{def:T_n}
We define $T_n$ to be the first time the walker reaches $s$, $-(n+1)$, or $(n+1)$.
\end{definition}

\begin{remark}
This is equivalent to considering the random walk on $V_n \cup \{s\}$ with transition probabilities given by $P(x,y)$ in \eqref{eq:P(x,y)} for $x,y \in V_n$, where the probabilities for $y=s$ are then automatically determined. The sink is an absorbing state ($P(s,s) = 1)$.
This point of view will be adopted in section~\ref{sec:walk}.
\end{remark}
Then $\mathbb{E}_x(T_n)$ is always finite, and 
\[
\mathbb{E}_x(T) = \lim_{n \to \infty} \mathbb{E}_x(T_n).
\]
Note that in Section~\ref{sec:q_n/p_n}, we define $T_N$ to be the stopping time on the domain $V_{x_N} = \llbracket -x_N, x_N \rrbracket$.  
In this case as well, we have
\[
\mathbb{E}_x(T) = \lim_{N \to \infty} \mathbb{E}_x(T_N).
\]
Finally, do not confuse $T_n$ and $T_N$ with $T_x$ the stopping time of the walk on $\Z$ associated with the trap sequence $\{x_k\}$ defined in subsection~\ref{subsec: monotonicity}.

\subsection{Overview of the main results}
We consider the trapped random walk with an arbitrary set of traps $D$.

The criticality of the full system is completely determined by the criticality of the corresponding half-line processes. By this, we mean the trapped random walk on $\Z_{>0} =\{1,2,3,\dots\}$ (resp.~$\Z_{<0}=\{\dots,-3,-2,-1\}$) with the same transition probabilities as the original walk, which is stopped upon reaching either the absorbing state $s$ or the boundary point $0$.

More precisely, the full system is critical if and only if at least one of these two half-line processes is critical (see Theorem~\ref{thm:excursion_equivalence} for the mathematical formulation).
\begin{remark}
The choice of $0$ as the reference point is not essential. By Theorem~\ref{thm:equivalencewalk}, the criticality of the system does not depend on the starting point $x \in \Z$. Therefore, the integer line $\Z$ can be split into two halves at any point. 
\end{remark}

Consequently, we can focus without loss of generality on the half-line $\Z_{>0}$. We denote
\[
D_{+} = D \cap \Z_{>0} = \{x_k\}_{k \geq 1}, \qquad 0 < x_1 < x_2 < \dots,
\]
and study the random walk with traps at positions $(x_k)_{k \geq 1}$. We use the convention $\mathbb{N} = \{0,1,2,\dots\}$. In the following, when we refer to the model or system as critical or non-critical, we mean the walk on the half-line. For the full walk on $\Z$ (or the original abelian sandpile), criticality occurs as soon as at least one of the half-line walks is critical.

\begin{remark}
In the special case of a symmetric distribution of traps (i.e., $x \in D$ if and only if $-x \in D$), the criticality of the walk on $\Z$ is equivalent to the criticality of the walk on $\Z_{>0}$.
\end{remark}

\begin{restatable}{theorem}{Sequence}
    \label{thm:q_np_n}
Let $D_+ = \{x_k\}_{k \geq 1}$ be the ordered sequence of trap positions on $\Z_{>0}$. 

Define the sequences $(p_n)_{n \ge1}$ and $(q_n)_{n \ge 1}$ by the recursions
\begin{equation}
\label{eq:defp_nq_n}
\left\{
\begin{aligned}
p_1 &= x_1, \\
p_n &= x_n + \sum_{i=1}^{n-1} (x_n - x_i) p_i, & \text{for } n \geq 2,
\end{aligned}
\right.
\quad
\left\{
\begin{aligned}
q_1 &= x_1^2, \\
q_n &= x_n^2 + \sum_{i=1}^{n-1} (x_n - x_i) (q_i+1), & \text{for } n \geq 2.
\end{aligned}
\right.
\end{equation}

Then, $\dfrac{q_n}{p_n}$ is a positive increasing sequence, and the model is critical if and only if
\[
\lim_{n \to \infty} \frac{q_n}{p_n} = \infty.
\]
\end{restatable}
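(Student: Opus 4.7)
The plan is to solve the linear equation for $\E_x(T_N)$ from Section~\ref{sec:framework} explicitly via discrete potential theory, identify the sequences $p_n$ and $q_n$ as the natural coefficients of this closed form, and then read off both monotonicity and the criticality dichotomy.

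I first recognize $p_n = h(x_n)$, where $h\colon \Z_{\geq 0}\cup\{s\}\to\R$ is the unique function with $h(0) = h(s) = 0$ and slope $1$ on $[0, x_1]$ that is harmonic for the dissipative walk ($2h(x)=h(x-1)+h(x+1)$ off traps, $3h(x_k)=h(x_k-1)+h(x_k+1)$ at the trap $x_k$). Since $h$ is piecewise affine and the dissipation makes its slope jump by exactly $h(x_k)$ at each trap, summing slopes reproduces the recursion for $p_n$ in \eqref{eq:defp_nq_n}. Next, for $f(x)=\E_x(T_N)$, the substitution $g(x)=f(x)+x^2$ turns the inhomogeneous $+1$-per-step equation for $f$ into: $g$ is harmonic for simple random walk off traps (i.e.\ $2g(x)=g(x-1)+g(x+1)$ there) and satisfies $3g(x_k)=g(x_k-1)+g(x_k+1)+x_k^2+1$ at each trap; thus $g$ is piecewise affine with slope jumping by $g(x_k)-x_k^2-1$ at $x_k$. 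Writing $g=\tilde a_1\,h+g^\star$, with $g^\star$ the particular solution of zero initial slope, a direct induction against \eqref{eq:defp_nq_n} identifies $g^\star(x_n)=x_n^2-q_n$, so
\[
\E_{x_n}(T_N) \;=\; f(x_n) \;=\; \tilde a_1\, p_n - q_n.
\]

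The constant $\tilde a_1=\tilde a_1(N)$ is pinned down by the truncation condition $f(x_N+1)=0$, yielding an explicit ratio whose numerator and denominator are positive linear combinations of $(q_i)_{i\leq N}$ and $(p_i)_{i\leq N}$ respectively, with leading terms $q_N$ and $p_N$. Combined with monotonicity and the auxiliary bound $q_n\leq x_n p_n$ (see below), one checks that $\tilde a_1(N)$ is bounded in $N$ if and only if $q_N/p_N$ is. Since $\E_1(T_N)=\tilde a_1(N)-1$ (because $g$ is linear with slope $\tilde a_1$ on $[0,x_1]$, giving $f(1)=\tilde a_1-1$), Theorem~\ref{thm:equivalencewalk} together with monotone convergence $\E_1(T)=\lim_N\E_1(T_N)$ yields the criticality dichotomy: the model is critical if and only if $q_N/p_N\to\infty$.

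For monotonicity I would difference \eqref{eq:defp_nq_n} to obtain $p_{n+1}-p_n=(x_{n+1}-x_n)\,a_{n+1}$ and $q_{n+1}-q_n=(x_{n+1}-x_n)\,b_{n+1}$, where $a_{n+1}=1+\sum_{i\leq n}p_i$ and $b_{n+1}=x_n+x_{n+1}+\sum_{i\leq n}(q_i+1)$. Then $q_{n+1}/p_{n+1}$ is a convex combination of $q_n/p_n$ and $b_{n+1}/a_{n+1}$, and monotonicity reduces to $p_n b_{n+1}>q_n a_{n+1}$. I would carry out a joint induction on (i) $q_k/p_k\leq x_k$ and (ii) strict monotonicity of $q_k/p_k$: for (i), write $q_{n+1}-x_{n+1}p_{n+1}=\sum_{i\leq n}(x_{n+1}-x_i)(q_i+1-x_{n+1}p_i)$, each summand non-positive under (i) at level $i$ provided $p_i\geq 1$; for (ii), use (i) to bound the negative contribution $\sum_{i<n}(x_{n+1}-x_i)(p_nq_i-q_np_i)$ in $p_nb_{n+1}-q_na_{n+1}$ against the explicit positive residue $p_n(x_{n+1}+n)-q_n$.

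The main obstacle is verifying that the lower-order correction terms in the closed-form expression for $\tilde a_1(N)$ do not mask the $q_N/p_N$ asymptotics, so that $\tilde a_1(N)$ and $q_N/p_N$ genuinely diverge together and remain bounded together. Once this is in hand, the harmonic-function identification of $p_n$, the emergence of $q_n$ from the particular solution, and the algebraic monotonicity are all routine manipulations of the recursions.
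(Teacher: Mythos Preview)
Your approach is essentially the paper's: solve the piecewise-quadratic equation for $E(x)=\E_x(T_N)$, observe that $E(x_k)$ depends affinely on the one free constant $A$ (your $\tilde a_1$) as $E(x_k)=p_kA-q_k$, and then fix $A$ by the right-hand boundary condition. The harmonic-function language for $h$ and $g$ is just a repackaging of the paper's direct computation of $A_k,B_k$ and the relation $A_k-A_{k-1}=E_k-1$.

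The one substantive difference is your choice of truncation, and it is precisely what manufactures the difficulty you flag as the ``main obstacle''. The paper truncates the half-line walk at $x_N$ itself (so $T_N$ is the hitting time of $\{0,x_N,s\}$), which makes the boundary condition simply $E(x_N)=0$. Since $E(x_N)=p_NA-q_N$, this pins down
\[
\tilde a_1(N)\;=\;A^{(N)}\;=\;\frac{q_N}{p_N}
\]
\emph{exactly}, with no lower-order correction terms whatsoever. Two consequences fall out immediately: first, $\E_1(T_N)=A^{(N)}-1=q_N/p_N-1$, so the criticality dichotomy is literally $\lim_N q_N/p_N=\infty$; second, $q_N/p_N$ is increasing simply because $T_N$ is increasing in $N$, hence so is $\E_1(T_N)$. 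Your boundary condition $f(x_N+1)=0$ keeps $x_N$ as an interior trap and forces you to expand $A_N,B_N$ in terms of all the $p_i,q_i$, which is why you end up needing both a separate algebraic induction for monotonicity and an asymptotic comparison between $\tilde a_1(N)$ and $q_N/p_N$. Neither is wrong in principle, but both are avoidable: shift the absorbing boundary from $x_N+1$ to $x_N$ and the whole argument collapses to two lines.
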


By analyzing this sequence, we find the following cases where we know whether the model is critical or non-critical. 
\begin{restatable}{theorem}{Weaknonc}
\label{Weaknonc}
    If \(x_k = \mathrm{O}(\rho^k)\) with \(\rho < \dfrac{3+\sqrt{5}}{2}\), then the system is non-critical.
\end{restatable}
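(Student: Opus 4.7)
The plan is to apply Theorem~\ref{thm:q_np_n}, which reduces the statement to proving that the positive increasing sequence $L_n := q_n/p_n$ remains bounded; equivalently, I will show that $\sum_n (L_n - L_{n-1}) < \infty$. Throughout, I will write $\phi = (1+\sqrt{5})/2$, so that $\phi^2 = (3+\sqrt{5})/2$.

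The first step is to extract an identity for the increments. Substituting $q_i = L_i p_i$ into the recursion defining $q_n$ and using the recursion for $p_n$ (together with $\sum_{i<n}(x_n-x_i)p_i = p_n - x_n$), one obtains
\[
\sum_{i=1}^{n-1}(x_n - x_i)(L_n - L_i)\,p_i \;=\; x_n(x_n - L_n) + \sum_{i=1}^{n-1}(x_n - x_i).
\]
Since $L$ is increasing, retaining only the $i = n-1$ term on the left and using $L_n \geq 0$ on the right yields
\[
L_n - L_{n-1} \;\leq\; \frac{x_n^2 + (n-1)\,x_n}{(x_n - x_{n-1})\,p_{n-1}}.
\]

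Next, I will establish a lower bound on $p_n$. The defining recursion is monotone in the trap positions: if $(y_k)$ is a strictly increasing sequence of positive integers satisfying $y_k \leq x_k$ and $y_k - y_i \leq x_k - x_i$, then $p_n^{(y)} \leq p_n^{(x)}$ by a straightforward induction. Since any admissible sequence dominates $y_k = k$, I may compare with that baseline. In the linear case, differencing the recursion yields the second-order linear recurrence $p_{n+2} = 3\,p_{n+1} - p_n$, whose larger characteristic root is exactly $\phi^2$. Hence $p_n^{(\mathrm{lin})} = F_{2n} \sim \phi^{2n}/\sqrt{5}$, and $p_n \geq c\,\phi^{2n}$ for some universal $c > 0$.

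The final step combines these bounds with the hypothesis. Using only $p_{n-1} \geq c\phi^{2(n-1)}$ and $x_n - x_{n-1} \geq 1$, one gets $L_n - L_{n-1} \leq C\,x_n^2/\phi^{2n}$, which is summable for $x_k = \mathrm{O}(\rho^k)$ only when $\rho < \phi$. To reach the full threshold $\rho < \phi^2$, I will also exploit the complementary product-type bound $p_n \geq x_1 \prod_{k=2}^n(x_k - x_{k-1})$, obtained by iterating $p_n \geq (x_n - x_{n-1})\,p_{n-1}$. This bound grows super-exponentially whenever $x_k$ exhibits geometric-type growth and effortlessly absorbs the $x_n^2$ numerator; sequences that are nearly linear, where the product bound is weak, are already controlled by the $\phi^{2n}$ bound. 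Taking the better of the two lower bounds on $p_{n-1}$ in each regime then yields summability of the increments for all $\rho < \phi^2$.

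The main technical obstacle is exactly this final combination: making the argument uniform over all sequences satisfying $x_k = \mathrm{O}(\rho^k)$. The value $\phi^2$ enters as the growth rate of $p_n$ in the linear extremal case, and the delicate point is the intermediate regime of sequences which are neither purely linear nor purely geometric. In that range one must interpolate carefully between the two complementary lower bounds on $p_n$ to retain summability all the way up to $\rho < \phi^2$.
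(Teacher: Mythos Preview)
Your argument is incomplete, and you explicitly acknowledge this in the final paragraph: the ``interpolation'' between your two lower bounds on $p_{n-1}$ is precisely the heart of the matter, and you do not carry it out. As it stands, your proof only establishes the result for $\rho<\phi$, not $\rho<\phi^2$.

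The gap is genuine. Your Fibonacci bound $p_{n-1}\ge c\phi^{2(n-1)}$ combined with $x_n-x_{n-1}\ge 1$ gives $L_n-L_{n-1}\le C\rho^{2n}/\phi^{2n}$, which is summable only for $\rho<\phi$. Your product bound $p_{n-1}\ge x_1\prod_{k=2}^{n-1}(x_k-x_{k-1})$ is \emph{completely useless} for sequences with many unit gaps $x_k-x_{k-1}=1$: there the product is just $x_1$, regardless of how many such gaps occur. Now consider a sequence that is linear on long stretches punctuated by occasional jumps, tuned so that $x_k\le C\rho^k$ with $\phi<\rho<\phi^2$. On the linear stretches the product bound is flat, so you must rely on the Fibonacci bound; but on those same stretches $x_n$ can be of order $\rho^n$ (inherited from a recent jump), and then $x_n^2/\phi^{2n}$ is \emph{exponentially large}. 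Simply taking ``the better of the two bounds in each regime'' does not resolve this: both bounds can be simultaneously inadequate at the same index. Making this work would require a quantitative statement about how the gain from a large jump \emph{propagates forward} through subsequent linear stretches, and you have not provided one.

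The paper proceeds quite differently. Rather than bounding $p_n$ from below, it expresses $q_n/p_n$ as a barycenter $\sum_k\mu_k x_k$ and studies the tail weights $R_k=\sum_{j>k}\mu_j$. The key inequality $x_k(1-x_k/x_j)\ge k(j-k)/j$ (valid because the $x_k$ are distinct positive integers) yields a recursive upper bound on $R_k$ which is then compared to a two-parameter family of linear recurrences whose dominant root can be pushed arbitrarily close to $\phi^2$. This comparison argument is what produces the sharp threshold; it does not separate into ``linear'' and ``geometric'' regimes at all.
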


The geometric sequence with ratio \( \frac{3 + \sqrt{5}}{2} \) acts as a threshold: sequences growing more slowly always yield critical systems, whereas slightly faster ones may be non-critical.

\begin{restatable}{theorem}{Weaknonctwo}
\label{Weaknonctwo}
For any increasing sequence of integers \( (y_k) \) such that
\[
k \left( \frac{3 + \sqrt{5}}{2} \right)^k = o(y_k) \quad \text{as } k \to \infty,
\]
there exists a trap sequence \( (x_k) \) such that the model is critical and \( x_k \leq y_k \) for all \( k \geq 1 \).

\end{restatable}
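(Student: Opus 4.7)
By Theorem~\ref{thm:q_np_n}, it suffices to construct an increasing sequence of positive integers $(x_k)$ with $x_k \leq y_k$ such that $r_n := q_n/p_n \to \infty$. The plan is to interleave a \emph{slow baseline}, in which $x_{k+1}-x_k = 1$, with sparse \emph{boost} times $k_1 < k_2 < \cdots$ at which $x_{k_j}$ is chosen close to $y_{k_j}$. A direct telescoping of the recursions \eqref{eq:defp_nq_n} gives
\[
p_n - p_{n-1} = (x_n - x_{n-1})\,a_{n-1}, \qquad q_n - q_{n-1} = (x_n - x_{n-1})\bigl(x_n + x_{n-1} + B_{n-1}\bigr),
\]
where $a_k := 1 + \sum_{i=1}^k p_i$ and $B_k := \sum_{i=1}^k (q_i+1)$; in particular $a_n$ satisfies the second-order recurrence $a_n = (2+x_n-x_{n-1})\,a_{n-1} - a_{n-2}$. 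On a baseline stretch this reduces to $a_n = 3\,a_{n-1} - a_{n-2}$, whose characteristic polynomial $\lambda^2 - 3\lambda + 1$ has dominant root $\phi^2 = \tfrac{3+\sqrt 5}{2}$; hence $a_n$ grows like $\phi^{2n}$ up to a multiplicative constant. This explains why the same constant $\phi^2$ is the threshold in both Theorems~\ref{Weaknonc} and~\ref{Weaknonctwo}.

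The core of the argument is a \emph{boost estimate}. Setting $x_{k_j} := \lceil a_{k_j-1}\rceil$ and using the bound $B_{k_j-1} \leq r_{k_j-1}\,a_{k_j-1} + k_j$, which follows from the monotonicity of $(r_i)$, one obtains from the identity
\[
r_{k_j} - r_{k_j-1} \;=\; \frac{(x_{k_j}-x_{k_j-1})\bigl(x_{k_j}+x_{k_j-1}+B_{k_j-1}-r_{k_j-1}\,a_{k_j-1}\bigr)}{p_{k_j}}
\]
that $r_{k_j}-r_{k_j-1} \to 1$ as $a_{k_j-1} \to \infty$. Hence, provided each boost occurs at a sufficiently large index, it raises $r$ by at least $1/2$, and iterating forces $r_{k_j} \to \infty$. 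I define $(x_k)$ inductively: given $x_1,\ldots,x_{k_j}$, extend by $x_k = x_{k_j} + (k-k_j)$ for $k_j < k < k_{j+1}$, and set $x_{k_{j+1}} = \lceil a_{k_{j+1}-1}\rceil$. The bound $x_k \leq y_k$ on a baseline stretch follows at once from the strict monotonicity of $(y_k)$ via $y_k \geq y_{k_j}+(k-k_j) \geq x_{k_j}+(k-k_j) = x_k$. The boost constraint $\lceil a_{k_{j+1}-1}\rceil \leq y_{k_{j+1}}$ translates, after tracking the amplification $a_{k_j} \sim a_{k_j-1}^2$ at each boost combined with the factor $\phi^{2m}$ accumulated over $m$ baseline steps, into an inequality of the form
\[
\log\!\biggl(\frac{y_{k_{j+1}}}{k_{j+1}\,\phi^{2k_{j+1}}}\biggr) \;\geq\; \Lambda_j,
\]
where $\Lambda_j$ is a quantity determined by the earlier boosts. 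Since the hypothesis $k\phi^{2k}=o(y_k)$ is precisely that the left-hand side tends to $+\infty$ with $k_{j+1}$, such an index $k_{j+1}$ always exists, however rapidly $\Lambda_j$ grows.

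The main obstacle is the extremely rapid growth of $\Lambda_j$: the squaring $a_{k_j} \sim a_{k_j-1}^2$ at each boost, compounded by the $\phi^2$-growth between boosts, makes $\Lambda_j$ a double-exponential type function of $j$. What saves the construction is that only one sufficiently large index $k_{j+1}$ has to satisfy the required inequality at each step, and the slack $y_k/(k\phi^{2k})$ is assumed to diverge (with no required rate), so such sparse indices always exist. The factor $k$ in the hypothesis $k\phi^{2k} = o(y_k)$ is precisely what is needed to absorb the logarithmic losses coming from integer rounding and from the error terms in the boost identity, which is why this particular threshold—and not simply $\phi^{2k}$—appears in the statement.
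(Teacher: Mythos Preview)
Your construction shares the paper's architecture: alternate unit-increment baseline stretches with sparse boosts, and observe that on a baseline the auxiliary sequence satisfies a linear recurrence with dominant root $(3+\sqrt5)/2$. The paper, however, works not with $r_n=q_n/p_n$ but with the barycentric tail sums $R_k$ of Section~\ref{subsec:studyofsequence}: on a baseline it proves $R_b\ge K/(b\phi^b)$ (with $\phi=(3+\sqrt5)/2$), then sets the boost value to the \emph{given} bound $y_b$ itself, so that $R_b(x_{b+1}-x_b)\ge\sqrt{\lambda_b}\to\infty$ and Lemma~\ref{lemma:5.2} concludes. No squaring occurs, and the factor $k$ in the hypothesis arises cleanly from $x_b\sim b$ in the bound $R_b\ge C'R_a/(x_b\phi^b)$, not from rounding losses.

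There is a genuine gap in your increment estimate. The inequality $B_{k_j-1}\le r_{k_j-1}a_{k_j-1}+k_j$ is one-sided: plugging it into the identity bounds $r_{k_j}-r_{k_j-1}$ only from \emph{above}, whereas you need a lower bound. Writing $S_n=B_n-r_na_n=\sum_{i\le n}p_i(r_i-r_n)+O(n)$, every summand is nonpositive by the monotonicity of $(r_i)$, so monotonicity alone gives no control on how negative $S_{k_j-1}$ can be; your identity actually gives
\[
r_{k_j}-r_{k_j-1}=1+\frac{x_{k_j-1}+S_{k_j-1}}{a_{k_j-1}}+o(1),
\]
and you must show $S_{k_j-1}\ge -o(a_{k_j-1})$. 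This is true (one can track the recursion $S_n=S_{n-1}-(r_n-r_{n-1})a_{n-1}+1$ along the baseline and show $|S_n|=O(x_n)$), but it is exactly the missing piece. A second issue is that your boost choice $x_{k_j}=\lceil a_{k_j-1}\rceil$ forces $a_{k_j}\sim a_{k_j-1}^2$, so $\log a_{k_j-1}$ grows at least like a tower in $j$; the constraint $x_{k_j}\le y_{k_j}$ is still satisfiable by taking $k_{j+1}$ enormous, but the bookkeeping is far heavier than in the paper's proof, where the boost value is external and no self-amplification occurs.
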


\begin{remark}
    \label{remark:counterexample}
    For all $\rho >\frac{3 + \sqrt{5}}{2}  $, define $y_k = \lfloor \rho ^k \rfloor$. Then we can apply Theorem~\ref{Weaknonctwo} to find a trap sequence $(x_k) $ such that the model is critical and \( x_k \leq \rho^k \) for all \( k \geq 0 \). This shows that Theorem~\ref{Weaknonc} does not extend beyond the threshold 
    \( \frac{3 + \sqrt{5}}{2} \), since for larger $\rho$ one can construct counterexamples.
\end{remark}

Above \( \left( \dfrac{3 + \sqrt{5}}{2} \right)^k \), critical and non-critical coexist. When the sequence is regular enough, in the sense that \( \frac{x_{n+1}}{x_n^2} \) stays consistently on one side of 1 (either always below or always above), then we can determine whether the system is critical or not:
\begin{restatable}{theorem}{Strongnonc}
\label{thm:Notc}
    If \(\limsup\limits_{n\to \infty} \dfrac{x_{n+1}}{x_n^2}<1\), then the model is non-critical. 
\end{restatable}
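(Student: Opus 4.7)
The plan is to use Theorem~\ref{thm:q_np_n}: since the model is non-critical if and only if the (known to be increasing) sequence $\rho_n := q_n/p_n$ remains bounded, it suffices to produce a uniform upper bound on $\rho_n$.

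The first step is to rewrite~\eqref{eq:defp_nq_n} as forward differences. Setting $A_n = 1 + \sum_{i=1}^n p_i$ and $B_n = \sum_{i=1}^n (q_i + 1)$, a direct subtraction gives
\[
p_n - p_{n-1} = (x_n - x_{n-1})\,A_{n-1}, \qquad q_n - q_{n-1} = (x_n - x_{n-1})(x_n + x_{n-1} + B_{n-1}).
\]
Computing $q_n p_{n-1} - p_n q_{n-1}$ from these two identities and dividing by $p_n p_{n-1}$ yields the exact formula
\[
\rho_n - \rho_{n-1} = \frac{x_n - x_{n-1}}{p_n}\bigl[(x_n + x_{n-1} + B_{n-1}) - A_{n-1}\rho_{n-1}\bigr].
\]
Monotonicity of $(\rho_n)$ makes the bracket non-negative; combining it with $p_n \geq (x_n - x_{n-1})\,A_{n-1}$ yields $\rho_n \leq (x_n + x_{n-1} + B_{n-1})/A_{n-1}$. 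Since $B_{n-1} \leq \rho_{n-1}(A_{n-1}-1) + (n-1)$ by the same monotonicity, we obtain the telescoping estimate
\[
\rho_n \leq \rho_{n-1} + \frac{x_n + x_{n-1} + (n-1)}{A_{n-1}}.
\]

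It then remains to show that the series $\sum_{n=2}^{\infty}(x_n + x_{n-1} + n)/A_{n-1}$ converges under $\limsup_n x_{n+1}/x_n^2 < 1$; this is the step I expect to be the main obstacle. The lower bound $A_n/A_{n-1} \geq 1 + x_n - x_{n-1}$ iterates to $A_n \geq 2^{n-1}(1+x_1)$ unconditionally (since each $\delta_k := x_k - x_{k-1} \geq 1$), which already handles the slow-growth regime where $x_n$ is sub-exponential. In the fast-growth regime $x_{n-1}/x_n \to 0$, the same recurrence upgrades to $A_n/A_{n-1} \geq (1-\eta)\,x_n$ for any $\eta > 0$ and $n$ large enough; combined with $x_{n+1} \leq c\,x_n^2$ (for some $c < 1$) this produces a geometric decay of $x_n/A_{n-1}$ with ratio $c/(1-\eta) < 1$. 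For intermediate exponential rates $x_n \sim \rho^n$ with $\rho \geq 2$, one uses the product bound $A_n \geq A_1 \prod_{k=2}^n (1+\delta_k)$, which is super-exponential since each $\delta_k$ is itself of order $\rho^k$, and in particular dominates $x_n$. Piecing the three estimates together gives the summability of $x_n/A_{n-1}$, and the tails $x_{n-1}/A_{n-1}$ and $n/A_{n-1}$ are dominated by the same bounds. Once summability is established, the telescoping estimate forces $\sup_n \rho_n < \infty$, and Theorem~\ref{thm:q_np_n} concludes that the model is non-critical.
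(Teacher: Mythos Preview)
Your telescoping identity is correct and elegant: the forward-difference formulas for $p_n,q_n$ are right, and the chain leading to $\rho_n \le \rho_{n-1} + (x_n+x_{n-1}+n-1)/A_{n-1}$ is valid. This is a genuinely different route from the paper, which instead rewrites $q_n/p_n$ as a barycenter $\sum \mu_k x_k$ and controls the tail sums $R_k=\sum_{j>k}\mu_j$.

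The gap is in your third step. Your three ``regimes'' are not exhaustive: a sequence with $\limsup x_{n+1}/x_n^2<1$ can freely alternate between unit increments $\delta_n=1$ and large jumps $x_{n+1}\approx c\,x_n^2$. At an index $n$ where $\delta_n=1$ but the next step is a jump, the one-step ratio you rely on,
\[
\frac{x_{n+1}/A_n}{x_n/A_{n-1}} \;\le\; \frac{x_{n+1}}{x_n(1+\delta_n)} \;\approx\; \frac{c\,x_n^2}{2x_n}\;=\;\frac{c\,x_n}{2}\;\longrightarrow\;\infty,
\]
so no term-by-term d'Alembert can work, and none of your three cases applies uniformly. ``Piecing the three estimates together'' hides the real difficulty. (The series $\sum x_n/A_{n-1}$ \emph{does} converge in such mixed examples, but proving it requires a block argument: a run of $L$ unit steps followed by a jump contributes $O(M/A)$, and one checks that $M_{j+1}/\tilde A_{j+1}\le 0.9\cdot (M_j+L_j)^2/(2^{L_j}M_j\,\tilde A_j)$ decays geometrically; you would need to carry this out.)

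The paper avoids this obstruction by using a \emph{look-ahead} bound: from $\mu_k\ge x_k(1-x_k/x_{k+1})R_k$ it gets $R_k\le \prod_{r\le k}\bigl(1+x_r(1-x_r/x_{r+1})\bigr)^{-1}$, so the d'Alembert ratio for $\sum R_k x_{k+1}$ is $f_k(x_{k+1}):=x_{k+1}\big/\bigl(x_k(1+x_k(1-x_k/x_{k+1}))\bigr)$, a function of $(x_k,x_{k+1})$ only. A direct study of $f_k$ shows $f_k(x_{k+1})\le\alpha<1$ whenever $x_{k+1}\le\tilde\alpha\,x_k^2$ with $\tilde\alpha<\alpha<1$, which matches the hypothesis exactly. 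The key structural difference is that your denominator carries $1+\delta_n=1+x_n-x_{n-1}$ (look-back), whereas the paper's carries $1+x_k-x_k^2/x_{k+1}$ (look-ahead); only the latter is controlled directly by $x_{k+1}/x_k^2$.
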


On the contrary:
\begin{restatable}{theorem}{Strongc}
\label{thm:Strongc}
   If $\displaystyle \sum_{n=1}^{\infty} \left(x_n\prod_{r=1}^{n-1} \frac{1}{1 + x_r} \right) = \infty$, then the model is critical. 
\end{restatable}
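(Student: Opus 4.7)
My plan is to invoke Theorem~\ref{thm:q_np_n}: the task reduces to showing $r_n := q_n/p_n \to \infty$ under the hypothesis.

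From the defining recursions, I would first extract the two telescoping identities
\[
p_n - p_{n-1} = (x_n - x_{n-1}) A_n, \qquad q_n - q_{n-1} = (x_n - x_{n-1}) B_n,
\]
where $A_n := 1 + \sum_{i=1}^{n-1} p_i$ and $B_n := x_n + x_{n-1} + (n-1) + \sum_{i=1}^{n-1} q_i$. These combine into the key identity for the increment of the ratio:
\[
r_n - r_{n-1} = \frac{(x_n - x_{n-1})(B_n - r_{n-1} A_n)}{p_n}.
\]
The identity $p_n = x_n A_n - \sum_{i<n} x_i p_i \leq x_n A_n$ also yields, via $A_{n+1} = A_n + p_n \leq A_n(1+x_n)$ and induction on $n$, the central bound $A_n \leq \prod_{r=1}^{n-1}(1+x_r)$.

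The strategy is then to establish an incremental bound of the form $r_n - r_{n-1} \geq c \cdot x_n/A_n$ for some absolute constant $c>0$. Once this is obtained, summing the increments and inserting the bound on $A_n$ would yield
\[
r_N \;\geq\; c \sum_{n=2}^N \frac{x_n}{\prod_{r=1}^{n-1}(1+x_r)} \;\longrightarrow\; \infty,
\]
by the hypothesis, which gives $r_N \to \infty$ and concludes the proof.

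The main obstacle is precisely this incremental lower bound. Via $p_n \leq x_n A_n$, it reduces to showing $B_n - r_{n-1} A_n \geq c\, x_n$. Setting $\Phi_n := r_{n-1} A_n - \sum_{i<n} q_i$, the substitutions $q_{n-1} = r_{n-1} p_{n-1}$ and $A_n = A_{n-1} + p_{n-1}$ give the crucial simplification $\Phi_n = r_{n-1} A_{n-1} - \sum_{i<n-1} q_i$, so that $\Phi_n$ depends only on $x_1, \ldots, x_{n-1}$; the new variable $x_n$ is confined to the other summands of $B_n - r_{n-1} A_n = x_n + x_{n-1} + (n-1) - \Phi_n$. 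A short induction on $n$, using that $q_i + 1 - x_n p_i \leq 0$ when the inductive hypothesis $r_i \leq x_i$ holds, proves $r_n \leq x_n$ for all $n$, from which $\Phi_n \leq r_{n-1} A_{n-1} \leq x_{n-1} A_{n-1} \leq \prod_{r<n}(1+x_r)$. The most delicate step, which I expect to be the real technical difficulty, is sharpening this estimate into a uniform comparison $\Phi_n \leq (1-c)\,x_n$ capable of producing the constant $c$ needed to close the increment bound for every trap configuration satisfying the divergent-sum hypothesis.
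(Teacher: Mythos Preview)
Your preliminary identities are all correct: the telescoping formulas for $p_n-p_{n-1}$ and $q_n-q_{n-1}$, the increment formula $r_n-r_{n-1}=(x_n-x_{n-1})(B_n-r_{n-1}A_n)/p_n$, the bound $A_n\le\prod_{r<n}(1+x_r)$, the simplification showing $\Phi_n=r_{n-1}A_{n-1}-\sum_{i<n-1}q_i$ depends only on $x_1,\dots,x_{n-1}$, and the inequality $r_n\le x_n$.

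The gap is precisely the step you flag as ``the real technical difficulty'', and it is not a mere sharpening: the target bound $\Phi_n\le(1-c)x_n$ (equivalently $B_n-r_{n-1}A_n\ge c\,x_n$) is \emph{false} in general. For $x_k=k$ one can check that $D_n:=B_n-r_{n-1}A_n$ satisfies the recursion $D_{n+1}=(p_{n-1}/p_n)D_n+3$, hence $D_n$ converges to a finite limit and $D_n/x_n\to 0$. That sequence does not satisfy the hypothesis, but any finite initial block $x_1,\dots,x_N$ (producing $\Phi_N>x_N$) can be extended by a fast tail so that the hypothesis \emph{is} satisfied while the pointwise bound still fails for $n\le N$. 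So you would have to prove the bound only for ``enough'' indices $n$ and simultaneously show that those indices carry the whole divergent mass of $\sum x_n/\prod_{r<n}(1+x_r)$; this is a genuinely new argument, not an improvement of your estimate $\Phi_n\le\prod_{r<n}(1+x_r)$ (which is far too weak, the product being typically much larger than $x_n$). There is also a smaller slip: from $p_n\le x_nA_n$ and $B_n-r_{n-1}A_n\ge c\,x_n$ you only get $r_n-r_{n-1}\ge c(x_n-x_{n-1})/A_n$, not $c\,x_n/A_n$; this is harmless for the final conclusion but worth noting.

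The paper avoids all of this by changing representation. It writes $q_n/p_n$ (up to a bounded error) as a barycenter $\sum_{k=1}^n\mu_k x_k$ with nonnegative weights $\mu_k$ summing to~$1$, obtained from the matrix identity $P=(\sum_m M^m)X$. Setting $R_k=\sum_{j>k}\mu_j$, an Abel summation gives $\sum_k\mu_k x_k=x_1+\sum_k R_k(x_{k+1}-x_k)$, while the recursion for $\mu_k$ immediately yields $\mu_k\le x_kR_k$, hence $R_k\ge R_{k-1}/(1+x_k)\ge\prod_{r\le k}(1+x_r)^{-1}$. This single inequality, combined with $\sum 1/k!<\infty$, finishes the proof in two lines. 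Your bound $A_n\le\prod_{r<n}(1+x_r)$ is the moral analogue of $R_k\ge\prod_{r\le k}(1+x_r)^{-1}$, but inside the barycentric representation it lands directly on the quantity being summed, so no comparison between $\Phi_n$ and $x_n$ is ever needed.
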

\begin{remark}
    This condition is not necessary: Theorem~\ref{Weaknonctwo} provides counterexamples. 
    
    If $x_k \leq \rho^k$, then the series $\displaystyle \sum_{n=1}^{\infty} \left(x_n\prod_{r=1}^{n-1} \frac{1}{1 + x_r} \right)$ converges, because $\displaystyle \prod_{r=1}^{n-1} \frac{1}{1 + x_r} \leq \frac{1}{(n-1)!}$.
\end{remark}
\begin{restatable}{corollary}{Weakc}
\label{cor:weakc}
    If $\liminf\limits_{n\to \infty} \dfrac{x_{n+1}}{x_n^2}>1$, then the model is critical.
\end{restatable}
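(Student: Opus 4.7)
The plan is to reduce this corollary directly to Theorem~\ref{thm:Strongc} by an elementary ratio-test argument on the series $\sum_{n\ge 1} a_n$ with
\[
a_n := x_n \prod_{r=1}^{n-1} \frac{1}{1+x_r}.
\]
The key observation is that
\[
\frac{a_{n+1}}{a_n} = \frac{x_{n+1}}{x_n(1+x_n)} = \frac{x_{n+1}}{x_n^2}\cdot\frac{x_n}{1+x_n},
\]
so the hypothesis is tailored exactly so that the first factor is eventually bounded below by some $c>1$, while the second factor tends to $1$ as soon as $x_n\to\infty$. Their product therefore eventually exceeds some $c'>1$, which forces $\sum_n a_n=\infty$ and lets us invoke Theorem~\ref{thm:Strongc}.

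To execute this, I would first pick $c\in\bigl(1,\;\liminf_{n\to\infty} x_{n+1}/x_n^2\bigr)$ and an index $N$ such that $x_{n+1}\geq c\,x_n^2$ for all $n\geq N$. Since the $x_k$ are positive integers (so in particular $x_n\geq 1$) and $c>1$, iterating this inequality from $x_N$ yields double-exponential growth, hence $x_n\to\infty$ and $x_n/(1+x_n)\to 1$. Fixing any $c'\in(1,c)$, the decomposition above then gives $a_{n+1}/a_n\geq c'$ for all sufficiently large $n$, so $a_n$ dominates a geometric sequence of ratio strictly greater than $1$; in particular the series $\sum_n a_n$ diverges, and Theorem~\ref{thm:Strongc} yields criticality.

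There is essentially no obstacle here: once Theorem~\ref{thm:Strongc} is available, the corollary is a purely elementary fact about the growth of a ratio. The only mild point of care is to extract a uniform lower bound from the liminf hypothesis and to check that $x_n\to\infty$ so that the factor $x_n/(1+x_n)$ cannot degrade the estimate below $1$.
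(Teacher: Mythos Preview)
Your proposal is correct and follows exactly the paper's approach: apply the ratio test to the series in Theorem~\ref{thm:Strongc}, using $a_{n+1}/a_n = x_{n+1}/(x_n(1+x_n))$ and the hypothesis to get divergence. One small simplification: you do not need to iterate $x_{n+1}\ge c\,x_n^2$ to conclude $x_n\to\infty$, since the $x_k$ form a strictly increasing sequence of positive integers and hence $x_n\ge n$.
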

\begin{restatable}{corollary}{Weakcthreshold}
\label{Weakcthreshold}
If, for every \(a > 1\), there exists \(k \ge 1\) such that \(x_k > a^{2^k}\), then the system is critical.
\end{restatable}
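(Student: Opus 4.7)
I argue by contraposition. Since $x_k > a^{2^k}$ is equivalent to $\log x_k / 2^k > \log a$, the hypothesis rephrases as $\sup_{k \geq 1} \log x_k / 2^k = +\infty$. It therefore suffices to show: if the model is non-critical, then $\log x_n / 2^n$ stays bounded in $n$.

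Assume non-criticality. By Theorem~\ref{thm:q_np_n} and the monotonicity of $(q_n/p_n)$, there is $M < \infty$ with $q_n/p_n \leq M$ for all $n$; the direct computation $q_2 p_1 - q_1 p_2 = x_1(x_2 - x_1)(x_2 + 1) > 0$ forces $M > x_1$, and monotonicity also gives $q_i \geq x_1 p_i$ for every $i$. Inserting the latter into the recursion for $q_n$ and using the identity $\sum_{i<n}(x_n - x_i)\, p_i = p_n - x_n$ (which is just the recursion for $p_n$) yields $q_n \geq x_n^2 + x_1 (p_n - x_n)$. Combined with $q_n \leq M p_n$, this gives the key lower bound
\[
(M - x_1)\, p_n \;\geq\; x_n(x_n - x_1).
\]

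The trivial upper bound $p_n \leq x_n B_{n-1}$, with $B_{n-1} := 1 + \sum_{i<n} p_i$ (obtained by dropping each $-x_i$ in the recursion for $p_n$), combined with the inequality above yields $x_n \leq (M - x_1) B_{n-1} + x_1$. Substituting this into $B_n = B_{n-1} + p_n \leq B_{n-1}(1 + x_n)$ produces, once $B_{n-1}$ exceeds the threshold $(1+x_1)/(M - x_1)$, a quadratic recurrence $B_n \leq C B_{n-1}^2$ with $C = 2(M - x_1)$. Iterating from that threshold onward gives $\log B_n = O(2^n)$, whence $\log x_{n+1} \leq \log\bigl((M - x_1) B_n + x_1\bigr) = O(2^n)$. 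This contradicts the rephrased hypothesis, completing the contrapositive.

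The argument is a short chain of elementary manipulations; the only creative step is combining the lower bound on $p_n$ (from the assumption $q_n/p_n \leq M$) with its trivial upper bound $p_n \leq x_n B_{n-1}$ to convert bounded $q_n/p_n$ into a self-improving estimate on $x_n$. Once this is in place, the doubly-exponential character of the recursion $B_n \leq C B_{n-1}^2$ pins down $\log x_n$ to grow at most like $2^n$, exactly matching the threshold isolated by the hypothesis. I do not anticipate any real obstacle beyond verifying the iterated quadratic recursion carefully.
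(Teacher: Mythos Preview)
Your argument is correct. The contrapositive reformulation is sound, the chain of inequalities checks out (including $q_2/p_2>x_1$, $q_n\ge x_n^2+x_1(p_n-x_n)$, and $p_n\le x_nB_{n-1}$), and since $p_i\ge x_i\ge i$ the sequence $B_{n-1}$ does eventually exceed the threshold $(1+x_1)/(M-x_1)$, so the quadratic recursion kicks in and gives $\log x_n=O(2^n)$ as claimed.

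Your route is genuinely different from the paper's. The paper deduces Corollary~\ref{Weakcthreshold} from Theorem~\ref{thm:Strongc}: it shows that convergence of $\sum_n x_n\prod_{r<n}(1+x_r)^{-1}$ forces $x_k\le\prod_{r<k}(1+x_r)$ for large $k$, then proves by induction that this product is at most $a^{2^k}$ via the telescoping identity $\prod_{r=0}^{k-1}(1+a^{2^r})=(a^{2^k}-1)/(a-1)$. This relies on the $R_k,\mu_k$ machinery of Section~\ref{subsec:studyofsequence} through which Theorem~\ref{thm:Strongc} is proved. You instead work directly with the $(p_n,q_n)$ recursion of Theorem~\ref{thm:q_np_n}, turning the bound $q_n/p_n\le M$ into $x_n\le(M-x_1)B_{n-1}+x_1$ and then into a self-reinforcing quadratic bound on $B_n$. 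Your approach is more self-contained---it bypasses Theorem~\ref{thm:Strongc} and the barycentric analysis entirely---while the paper's approach has the advantage of isolating the series criterion as an independently useful sufficient condition for criticality. Both arrive at the same $2^n$ growth threshold for $\log x_n$, and neither is obviously shorter once the respective prerequisites are in place.
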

\begin{remark}
    The sequences $a^{2^k}$ thus form the boundary between sequences that may yield a non-critical system and those that necessarily yield a critical system.\\
    Summary: if \(x_k \ll \left( \frac{3+\sqrt{5}}{2} \right)^k\), the system is non-critical.  
If \(x_k \gg a^{\,2^k}\), the system is critical.  
In all other cases, both regimes coexist; a practical test consists in examining the ratio \(\dfrac{x_{k+1}}{x_k^2}\).

\end{remark}

\begin{example}
\label{ex:noncritical}
Each of the following sequences of traps is associated with a non-critical model:
\begin{itemize}
  \item \( x_k \sim a^{b^k} \), with \( a > 1 \), \( 1 < b < 2 \).

  \item \( x_k \sim \lambda a^{2^k} \), with \( a > 1 \), \( \lambda > 1 \).
\end{itemize}
\end{example}

\begin{example}
\label{ex:critical}
Each of the following sequences of traps is associated with a critical model:
\begin{itemize}
  \item \( x_k \sim a^{b^k} \), with \( a > 1 \) and \( b > 2 \).

  \item \( x_k \sim \lambda a^{2^k} \), with \( a > 1 \) and \( \lambda < 1 \).

  \item \( x_k = a^{2^k}(1 + \varepsilon_k) \) with \(a>1\) and \( \sum_{k=1}^\infty |\varepsilon_k| < \infty \).
\end{itemize}
\end{example}
\begin{remark}
    These examples make it possible to recover results from~\cite{redig2025randomwalksfieldsoft}: the recurrence relation on \(\lvert I_k \rvert \)
    \[
        |I_{k+1}| = c\,|I_k|^2,
    \]
    yields the explicit form
    \[
       \lvert I_k \rvert= \frac{1}{c}\,(c\lvert I_0\rvert)^{2^k}.
    \]
    Since \(\lvert I_k \rvert= x_k - x_{k-1}\), we obtain
    \[
        x_k = \sum_{j=1}^k \lvert I_j \rvert \ \sim\ \lvert I_k \rvert\quad \text{as } k \to \infty.
    \]
    Setting \(\lambda = \frac{1}{c}\), these examples show that the system is \emph{critical} for \(c \geq 1\) and \emph{non-critical} for \(c < 1\).
\end{remark}

\begin{remark}
    The method exposed here can be used to solve a more general case where the probability to fall in a trap is \(p \in (0,1)\).
    Let \(\alpha = \frac{2p}{1-p}\), if \(\limsup \frac{x_{n+1}}{x_n^2}<\alpha\), the model is non-critical and if \(\liminf \frac{x_{n+1}}{x_n^2}>\alpha,\) the model is critical.
\end{remark}

\subsection{Remarks on monotonicity}
\label{subsec: monotonicity}
We consider two sequences of traps \((x_k)\) and \((y_k)\), and denote by \(T_x\) (resp. \(T_y\)) the associated stopping times. We would like to compare \(\E(T_x)\) and \(\E(T_y)\) if we have some comparison between \((x_k)\) and \((y_k)\).

\begin{proposition}
    If \(\{x_k\} \subset \{y_k\}\), then \(T_x \geq T_y\).
\end{proposition}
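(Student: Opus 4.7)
The natural approach is a coupling argument: I would construct both walks on a common probability space, started from the same site, so that the $y$-walk is absorbed no later than the $x$-walk pointwise. Since $\{x_k\}\subset\{y_k\}$, the transitions of the two walks differ only at sites in $\{y_k\}\setminus\{x_k\}$, which suggests keeping the walks synchronized at the same position for as long as possible and only letting them separate when the $y$-walker is killed by an ``extra'' trap unseen by the $x$-walker.

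Concretely, at each time step I draw a uniform variable $U_t \in [0,1]$, together with an auxiliary independent coin, and update both walks jointly according to their common position. If this position is not in $\{y_k\}$, both walks are at non-traps, so I move them both left when $U_t<1/2$ and right otherwise. If the position is in $\{x_k\}$, both walks are traps, and I move them jointly to sink/left/right according to the thirds of $[0,1]$. The delicate case is a position in $\{y_k\}\setminus\{x_k\}$: here the $y$-walker has three options (each with probability $1/3$) while the $x$-walker has only two (each with probability $1/2$). I set the rule: if $U_t<1/3$, send the $y$-walker to the sink and, using the auxiliary coin, send the $x$-walker left or right with probability $1/2$ each; if $1/3 \le U_t < 2/3$, both move left together; if $U_t \ge 2/3$, both move right together. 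Once the $y$-walker is absorbed, the $x$-walker is continued independently with its own randomness.

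Two verifications remain. First, the marginal distributions of the two coordinates are indeed those of the $x$-walk and the $y$-walk; this is immediate outside the mixed case, and in the mixed case the $x$-walker moves left with total probability $\tfrac13\cdot\tfrac12+\tfrac13=\tfrac12$ (similarly right), while the $y$-walker goes to sink/left/right with probability $1/3$ each. Second, under this coupling the two walks occupy the same site until at least one of them is absorbed, so either they reach the sink simultaneously at a common trap (then $T_x=T_y$), or the $y$-walker is absorbed strictly earlier at a trap in $\{y_k\}\setminus\{x_k\}$ (then $T_y<T_x$). In either case $T_y\le T_x$ pointwise, which yields the stated inequality and, a fortiori, $\E(T_x)\ge\E(T_y)$.

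There is no serious obstacle here; the argument is essentially bookkeeping, and the only step that deserves care is the marginal check in the mixed case, which is a one-line computation.
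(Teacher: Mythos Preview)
Your coupling is correct and yields the pointwise inequality $T_y\le T_x$, so the proof is valid.

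The paper takes a different, shorter route: it invokes Proposition~\ref{prop:Decorrelation walk}, which represents each trapped walk as a \emph{single} simple random walk $(U_n)$ stopped at the $\mathcal{N}$-th trap visit, where $\mathcal{N}$ is an independent geometric variable. Under that representation, both the $x$-walk and the $y$-walk follow the \emph{same} path $(U_n)$ and share the \emph{same} $\mathcal{N}$; since $\{x_k\}\subset\{y_k\}$, the count of $y$-trap visits along $(U_n)$ dominates the count of $x$-trap visits at every time, so the $\mathcal{N}$-th $y$-trap is hit no later than the $\mathcal{N}$-th $x$-trap. This gives $T_y\le T_x$ in one line, without the case analysis you perform at sites of $\{y_k\}\setminus\{x_k\}$. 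Your direct step-by-step coupling is self-contained and does not require the decorrelation lemma, which is a virtue; the paper's approach is more conceptual and avoids the bookkeeping of the mixed case entirely.
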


\begin{proof}
    This is clear from Proposition~\ref{prop:Decorrelation walk}: the random walk with traps placed at \(\{y_k\}\) encounters more traps than the one with traps at \(\{x_k\}\).
\end{proof}

We would have liked a simple characterization of criticality in terms of the asymptotic analysis of the sequence \(x_k\), that is, ideally a property like \textit{``if \(x_k = o(y_k)\) and \(\E(T_y) < \infty\), then \(\E(T_x) < \infty\)''}.
This would follow from a monotonicity of the expectation of the form \textit{``if \(x_k \leq y_k\), then \(\E(T_x) \leq \E(T_y)\)''}.
However, this hypothesis, although intuitive, turns out to be false due to the previously exposed results:

\textbf{Counterexample:} we have seen that if \(x_k \sim \lambda a^{2^k}\), then if \(\lambda > 1\), \(\E(T_x) < \infty\), whereas if \(\lambda \leq 1\), \(\E(T_x) = \infty\).
Thus, by taking for example \(\lambda_1 = 1\) and \(\lambda_2 = 2\), \textbf{one passes from an infinite to a finite expectation even though the traps have been more widely spaced (by a homothety)}.\\

Our intuition is challenged because one might expect that spacing out the traps would increase the time to get from one trap to the next, thus increasing the stopping time.
However, adopting the point of vue of Proposition~\ref{prop:Decorrelation walk}, we may understand that sometimes spacing out the traps places them in strategic positions that can stop trajectories that would otherwise have been very long.\\

What is even more remarkable is that for all $\rho > \frac{3+\sqrt{5}}{2}$ there exists a sequence of traps \( (x_k)_{k \ge 1} \) such that \( x_k \leq \rho^{k } \), for which the model remains \textbf{critical} (see remark~\ref{remark:counterexample}). Observe that \( x_k \ll a^{b^k} \) for all \( a > 1 \), \( b > 1 \); that is, this sequence grows much more slowly than any double-exponential sequence, and is thus negligible compared to the non-critical cases discussed above.

\section{Definition of criticality and connection with the trapped random walk}
\label{sec:walk}

Define $D = \{ x_k \}_{k \in \Z}$ the set of traps.

We can then define the graph's Laplacian matrix and Green's function:
\begin{definition}[Laplacian of the graph]

The Laplacian matrix \( \Delta \) is defined for all \( x, y \in \Z \) by :
\begin{equation}
\Delta_{x,y} = 
\begin{cases}
\deg(x) & \text{si } x = y, \\
-1 & \text{si } x \sim y, \\
0 & \text{sinon},
\end{cases}
\end{equation}
where the degree is given by:
\[
\deg(x) = 2 + \mathbf{1}_{x \in D}.
\]

\end{definition}
\begin{definition}[Green's function]
Let \(V_n = \llbracket -n, n \rrbracket \) denote the restricted graph. The Green's function on this graph is defined as:
\begin{equation}
G_n(x, y) = \Delta_n^{-1}(x, y),
\end{equation}
where \( \Delta_n \) is the discrete Laplacian matrix restricted to the vertices in \( \llbracket -n, n \rrbracket \).
\end{definition}

\begin{remark}
We have \( G_n(x,y) = \E(n(x,y,\cdot)) \) (see e.g. \cite{jaraisandpilemodel}), where expectation is taken according to the invariant measure of the Markov chain, and where \( n(x,y,\eta) \) denotes the number of grains that the vertex \( y \) emits before stabilization when a grain of sand is added to \( x \) from the configuration \( \eta \).
\end{remark}

\begin{definition}[Criticality]

    We define $\displaystyle G(x,y) = \lim_{n \to \infty} \uparrow G_n(x,y) = \sup_{n \in \N} G_n(x,y) \in [0,\infty]$ whose existence will be justified later.
    Then, as defined in \cite{Redig2018-gl}, the model is said to be \textbf{non-critical} if for all $x \in \Z$: \[ \sum_{y\in \Z} G(x,y) = \lim_{n \to \infty}  \sum_{y\in \Z} G_n(x,y) < \infty.\]

    Otherwise, the model is said to be \textbf{critical}.
\end{definition}

\begin{remark}
The model is \emph{non-critical} if, and only if, for any $x \in \Z$, the average size of an avalanche triggered in $x$ is finite, as specified in \cite{Redig2018-gl}.
\end{remark}

\textbf{Connection with the trapped random walk defined in Section~\ref{sec:mainresults}.}  

Recall the random walk $(X_n)_{n \in \N}$ on \(V_n \cup \{s\}\) introduced in Section~\ref{sec:mainresults}, with stopping time \(T_n\) upon reaching the sink \(s\). Its transition matrix \(P\) from \eqref{eq:P(x,y)} can be written as
\[
P(x, y) = \delta_{x, y} - \frac{\Delta_n(x, y)}{\deg(x)},
\]
where \(\Delta_n\) is the graph Laplacian on \(V_n\) and \(\deg(x)\) is the degree of \(x\in V_n\).

Let $\E_x( \cdot)= \E(\cdot \mid X_0=x).$ We can write 
 \(\Delta_n = \Lambda (I_n-P)\) where $\Lambda$ is a diagonal matrix with $\Lambda(x,x) = \deg(x)$. Hence, if the series of the right-hand term converges:
\begin{equation}G_n = (I_n -P)^{-1} \Lambda^{-1} = (\sum_{k =0}^{\infty}P^k)\Lambda
^{-1}.\end{equation}

We can define $Q_n(x,y)= \displaystyle \sum_{k =0}^{\infty}(P^k)_{x,y} \in [0,\infty] $.

Thus $Q_n(x,y) = \E_x(\text{number of times $y$ is reached in the random walk})$.
Then,
\begin{equation}
\sum_{y \in V_n} Q_n(x,y) = \E_x(T_n).
\end{equation}

\noindent
It is then clear from Markov's property that $\E_x(T_n)$ is finite, so $\sup_{x \in V_n} \sum_{y \in V_n}  Q_n(x,y)< \infty $ which guarantees that the $P^k$ series converges.

Finally:
\begin{align*}
 G_n(x,y)
&= \frac{1}{\deg(y)}\sum_{k =0}^{\infty}(P^k)_{x,y} \\\
&= \frac{Q_n(x,y)}{\deg(y)}\\
&= \frac{1}{\deg(y)} \, \E_x\big(\text{number of reaches of } y \text{ during random walk on } V_n\big).
\end{align*}

$G_n(x,y)$ is therefore increasing with $n$: we can couple the random walks on the different $V_n$ by considering the global random walk on $\Z\cup\{s\}$ so that the walk on $V_n$ stops when the global random walk falls in $s$ or exits $\llbracket -n,n \rrbracket$.\\

This justifies the existence of $G(x,y)$. We can now express the criticality of the model from the random walk on $\Z$:

\[\frac{1}{3} \E_x(T_n) \leq \sum_{y\in V_n} G_n(x,y) = \sum_{y\in V_n} \frac{Q_n(x,y)}{\deg(y)} \leq \frac{1}{2} \E_x(T_n).\]

Let $T= \displaystyle \lim_{n \to \infty} \uparrow T_n$ the time to reach $s$ for the global random walk on $\Z \cup \{s\}$. We thus obtain the following characterization:

\begin{proposition}

The model is non-critical if and only if for every $x \in \Z$,
$
\ \E_x(T) < \infty.
$

\begin{remark}
We will see in the next section that this is equivalent to the existence of some $x \in \Z$ such that $\E_x(T) < \infty$, which will complete the proof of Theorem~\ref{thm:equivalencewalk}.
\end{remark}
\end{proposition}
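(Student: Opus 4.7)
My plan is to take the limit $n\to\infty$ in the two-sided bound
\[
\tfrac{1}{3}\,\E_x(T_n) \;\leq\; \sum_{y\in V_n} G_n(x,y) \;\leq\; \tfrac{1}{2}\,\E_x(T_n),
\]
which has just been established, and thereby bracket $\sum_{y\in\Z} G(x,y)$ by $\E_x(T)$ in $[0,\infty]$:
\[
\tfrac{1}{3}\,\E_x(T) \;\leq\; \sum_{y\in \Z} G(x,y) \;\leq\; \tfrac{1}{2}\,\E_x(T).
\]
Once this bracket is in hand, the statement is immediate from the definition of non-criticality, since $\E_x(T)<\infty$ will be equivalent to $\sum_{y\in\Z} G(x,y)<\infty$, for each fixed $x$.

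The only substantive point is justifying the passage to the limit on both sides. For the right-hand factor, the coupling of the truncated walks to the global walk on $\Z\cup\{s\}$ already described gives $T_n\uparrow T$ pointwise, hence $\E_x(T_n)\uparrow \E_x(T)\in[0,\infty]$ by monotone convergence. For the middle term, I extend $G_n(x,\cdot)$ by $0$ on $\Z\setminus V_n$ so that $\sum_{y\in V_n} G_n(x,y) = \sum_{y\in\Z} G_n(x,y)$; the excerpt has already shown that $G_n(x,y)$ is non-decreasing in $n$ with supremum $G(x,y)$, and monotone convergence against the counting measure then yields $\sum_{y\in\Z} G_n(x,y)\uparrow \sum_{y\in\Z} G(x,y)$. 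Passing to the limit termwise in the bracket (all quantities are non-negative, so there is no issue with $+\infty$) gives the displayed inequality. Since $1/3$ and $1/2$ are finite and positive, $\E_x(T)$ and $\sum_{y\in\Z} G(x,y)$ are simultaneously finite or infinite, and taking the ``for every $x\in\Z$'' quantifier on both sides matches the definition of non-criticality exactly.

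I do not anticipate a real obstacle: everything has been prepared in the preceding pages, so the argument reduces to an application of monotone convergence inside a bracketing inequality. The only mildly fiddly point is the bookkeeping of extending $G_n(x,\cdot)$ by zero so that both sums are indexed by the same set before invoking monotone convergence; this is needed to keep the middle term literally non-decreasing in $n$ rather than merely eventually dominated by the limit.
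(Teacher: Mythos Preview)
Your proposal is correct and matches the paper's approach exactly: the paper simply displays the bracket $\tfrac{1}{3}\E_x(T_n)\le \sum_{y\in V_n}G_n(x,y)\le \tfrac{1}{2}\E_x(T_n)$, defines $T=\lim_n\uparrow T_n$, and immediately states the proposition, leaving the monotone-convergence passage to the reader. You have spelled out precisely that passage (including the bookkeeping of extending $G_n(x,\cdot)$ by zero), so your argument is a faithful and slightly more explicit version of the paper's.
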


In practice, we will look at $\E(T_n)$ and then make $n$ tend towards $+\infty.$

\section{Study of the trapped random walk on \(\Z\): framework}
\label{sec:framework}
We consider a random walk on \( \Z \cup \{s\} \) where the set of dissipative sites (traps) is denoted by \( D \). The transition probabilities are given by~\eqref{eq:P(x,y)}.
Let $X_n$ be the position at time $n$ of the walk. The stopping time of the walks is defined by 
\begin{equation}
T = \inf\{n\geq1 : X_n=s\}.
\end{equation}
 It is interesting\footnote{This property is used in subsection~\ref{subsec: monotonicity}} to decorrelate the number of traps visited and the simple walk on $\Z$ via the following proposition:

\begin{proposition}
\label{prop:Decorrelation walk}
Let $\mathcal{N}$ be a geometric random variable on $\Z_{>0}$ with parameter $1/3$, and $(U_n)_{n \in \N}$ be a random walk on $\Z$ independent of $\mathcal{N}$, such that :
\[
\Prob(U_{n+1} = U_n \pm 1)=\frac{1}{2}
\]
We define the process $(V_n)_{n \in \N}$ by :
\[
V_n = 
\begin{cases}
U_n & \text{if the number of traps visited by } (U_k)_{k < n} \text{ is strictly less than } \mathcal{N}, \\
s & \text{otherwise}.
\end{cases}
\]
Then, the process \( (V_n)_{n \in \N} \) has the same distribution as \( (X_n)_{n \in \N} \).
\end{proposition}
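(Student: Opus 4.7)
The plan is to show that $(V_n)_{n \in \N}$ is a Markov chain on $\Z \cup \{s\}$ with exactly the transition kernel $P$ of \eqref{eq:P(x,y)}. Since by construction $V_0 = U_0$, which we identify with $X_0$, the equality in law of the two processes then follows from the fact that a Markov chain is determined by its initial state and its transition kernel. So everything reduces to computing the conditional distribution of $V_{n+1}$ given the past and checking it coincides with $P(V_n, \cdot)$.

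Let $c_n := \#\{0 \le k < n : U_k \in D\}$, which is $\sigma(U_0,\dots,U_{n-1})$-measurable. By definition $V_n = s$ iff $c_n \ge \mathcal{N}$, and since $c_n$ is non-decreasing in $n$ the state $s$ is absorbing, matching $P(s,s)=1$. On the event $\{V_n = x\} = \{U_n = x,\ \mathcal{N} > c_n\}$ with $x \in \Z$, I would split on whether $x \in D$. If $x \notin D$, then $c_{n+1} = c_n$, so $V_{n+1} = U_{n+1} = x \pm 1$ each with conditional probability $1/2$, matching $P(x,\cdot)$. If $x \in D$, then $c_{n+1} = c_n + 1$, so $V_{n+1} = s$ iff $\mathcal{N} = c_n + 1$; by the memoryless property of the geometric law, $\Prob(\mathcal{N} = c_n+1 \mid \mathcal{N} > c_n) = 1/3$, and on the complementary event $V_{n+1} = U_{n+1} = x \pm 1$ each with conditional probability $1/2$. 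Multiplying the two independent ingredients gives $V_{n+1} \in \{s, x+1, x-1\}$ each with probability $1/3$, which is exactly $P(x,\cdot)$ for $x \in D$.

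The main subtlety to watch is the conditioning step: I must argue that, given $(U_0,\dots,U_n)$ and the event $\{\mathcal{N} > c_n\}$, the increment $U_{n+1}-U_n$ is still a uniform $\pm 1$ sign \emph{independent} of the residual $\mathcal{N} - c_n$, and that the latter is again geometric with parameter $1/3$. Both facts follow directly from the independence of $\mathcal{N}$ and $(U_n)_{n \in \N}$ postulated in the statement together with memorylessness of the geometric distribution, but these should be stated explicitly, since otherwise the factorization $(1/3) \cdot 1 + (2/3)\cdot(1/2)$ producing the three equal probabilities $1/3$ is not justified. Once these two ingredients are laid out cleanly, the verification of the Markov property and of each transition probability becomes routine, and the conclusion follows from standard Markov chain uniqueness.
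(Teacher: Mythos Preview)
Your proposal is correct and is in fact considerably more detailed than the paper's own treatment, which consists of a single sentence: ``The proof of the proposition consists simply in observing that the laws of the paths $(X_1,\dots,X_n)$ and $(V_1,\dots,V_n)$ are indeed the same.'' Your explicit verification that $(V_n)$ is Markov with transition kernel $P$---via the memorylessness of $\mathcal{N}$ and its independence from the simple walk---is exactly the computation underlying that sentence, so the approaches coincide; you have simply filled in what the paper leaves to the reader.
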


The proof of the proposition consists simply in observing that the laws of the paths $(X_1,...,X_n)$ and $(V_1,...,V_n)$ are indeed the same.

\begin{remark}
    This property states that the random walk \((X_n)\) can be viewed either as a walk on \(\Z\) with random absorption at certain vertices called ``traps'', or as a simple walk on \(\Z\) stopped after visiting a random number of traps, independently of the path. \\
\end{remark}

The rest of the section is organized as follows:
we first complete the proof of Theorem~\ref{thm:equivalencewalk} and show that the model is critical when $D$ is finite in subsection~\ref{subsec:finiteD}. Then, in subsection~\ref{subsec:infiniteD}, we show how to reduce the question of the criticality of the walk on $\mathbb{Z}$ 
to the criticality of two walks on $\mathbb{N}$.  This reduction is crucial to obtain a recurrence relation for $\mathbb{E}_x(T)$ starting from the origin.

\subsection{Finite trap set case}
\label{subsec:finiteD}
We will show that the expectation of $T$ is infinite by introducing the method used in the general case.
Let $E(x)= \E_x(T) = \E(T \mid X_0=x) \in [ 0,+\infty ]$. By the total expectation formula and the Markov property, $E$ verifies:
\begin{equation*}
\begin{aligned}
E(x) &= \E(T \mid X_0 = x) \\
     &= \E(T \mid X_0 = x,\, X_1 = x+1)\, P(x, x+1) 
     + \E(T \mid X_0 = x,\, X_1 = x-1)\, P(x, x-1) \\
     &\quad + \E(T \mid X_0 = x,\, X_1 = s)\, P(x, s) \\
     &= \big( \E(T \mid X_0 = x+1) + 1 \big)\, P(x, x+1) 
     + \big( \E(T \mid X_0 = x-1) + 1 \big)\, P(x, x-1)
     + 1 \cdot P(x, s).
\end{aligned}
\end{equation*}

Hence $E$ verifies the discrete differential equation:
\begin{equation}
\label{eq:systemE(x)}
\begin{cases}
  E(x) = \frac{1}{2} E(x-1) + \frac{1}{2} E(x+1) + 1, & \text{if } x \notin D, \\
  E(x) = \frac{1}{3} E(x-1) + \frac{1}{3} E(x+1) + 1, & \text{if } x \in D.
\end{cases}
\end{equation}

First notice that if $E(x)$ is finite for some $x \in \Z$ then $E(x)$ is finite for all $x \in \Z$. \textbf{This concludes the proof of Theorem~\ref{thm:equivalencewalk}.}

Now, if $D$ is finite, assume by contradiction that $E(x)$ is finite for all $x \in \Z$.
Then, at $x> \max D$, E verifies the equation $E(x+1)-2E(x)+E(x-1)=-2$ whose solutions are:
\[ 
E(x) = -x^2+Ax+B, ~~(A,B)\in \R^2.
\]

But then for $x$ large enough $E(x)<0$ which is absurd. We conclude:

\begin{proposition}
 If $|D| < \infty$, then $\E(T) = +\infty$, i.e., the model is critical.
\end{proposition}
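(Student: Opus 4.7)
The plan is to proceed by contradiction, exploiting the fact that a finite trap set leaves only the trivial random-walk dynamics outside a bounded region, where the expected hitting time equation admits only downward-parabolic solutions.

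First I would set $E(x) = \E_x(T) \in [0,+\infty]$ and derive the fundamental recurrence by conditioning on the first step and invoking the strong Markov property: for $x \notin D$, $E(x) = \tfrac12 E(x-1) + \tfrac12 E(x+1) + 1$, and for $x \in D$, $E(x) = \tfrac13 E(x-1) + \tfrac13 E(x+1) + 1$. A preliminary observation, useful also for closing Theorem~\ref{thm:equivalencewalk}, is that $E$ is everywhere finite as soon as it is finite at a single point: indeed, from either form of the recurrence, finiteness at $x$ forces finiteness at $x\pm 1$, and this propagates to all of $\Z$.

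Next, assume toward a contradiction that $E(x) < \infty$ for every $x \in \Z$. Since $|D| < \infty$, choose $N > \max D$. For all $x > N$, the discrete equation reduces to the second-order linear recurrence $E(x+1) - 2E(x) + E(x-1) = -2$. The general solution is the sum of a particular quadratic solution with the affine solutions of the homogeneous equation, giving $E(x) = -x^2 + Ax + B$ for some constants $A, B \in \R$ determined by two initial values.

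The contradiction is immediate: $-x^2 + Ax + B \to -\infty$ as $x \to +\infty$, so eventually $E(x) < 0$, which is impossible since $T \geq 0$. Hence $E(x) = +\infty$ for every $x \in \Z$, so the model is critical. There is essentially no hard step here; the only thing to be mildly careful about is the propagation-of-finiteness lemma, since without it one could not rule out $E \equiv +\infty$ on one side and finite on the other, and it is precisely that lemma which also supplies the missing half of Theorem~\ref{thm:equivalencewalk}.
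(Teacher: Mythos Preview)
Your proof is correct and follows essentially the same approach as the paper: derive the one-step recurrence for $E(x)$, note that finiteness propagates (which also closes Theorem~\ref{thm:equivalencewalk}), and then observe that beyond $\max D$ the equation $E(x+1)-2E(x)+E(x-1)=-2$ forces $E(x)=-x^2+Ax+B$, eventually negative. The paper's argument is identical in structure and detail.
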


\subsection{Infinite trap set case}
\label{subsec:infiniteD}

We assume $\lvert D \rvert = \infty$. We show that the criticality of the model does not depend on a finite number of traps and that the system is non-critical if and only if two trapped random walks on $\mathbb{Z}_{>0}$ and $\mathbb{Z}_{<0}$ are non-critical.

Throughout, we write $\mathbb{P}_x$ and $\mathbb{E}_x$ for probability and expectation conditioned on $X_0 = x$.

\begin{theorem}
\label{thm:excursion_equivalence}
Let $T$ denote the absorption time in the sink $s$ for the walk started at $0$. Then
\[
\mathbb{E}_0[T] < \infty 
\quad \Longleftrightarrow \quad 
\mathbb{E}_1[\tau] < \infty \;\; \text{and} \;\; \mathbb{E}_{-1}[\tau] < \infty,
\]
where $\tau = \inf\{n \geq 0 : X_n \in \{0,s\}\}$.  

Moreover, setting
\[
q := p + \frac{1-p}{2} \, p_+ + \frac{1-p}{2} \, p_-,
\]
where 
\[
p_+ = \mathbb{P}_1(X_\tau = s), 
\qquad 
p_- = \mathbb{P}_{-1}(X_\tau = s),
\]
we have
\begin{equation}
\label{eq:absorption_formula}
\mathbb{E}_0[T] 
= \frac{p \cdot 1 + \tfrac{1-p}{2} \, \mathbb{E}_1[\tau] + \tfrac{1-p}{2} \, \mathbb{E}_{-1}[\tau]}{q}.
\end{equation}
\end{theorem}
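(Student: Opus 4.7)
The plan is to combine a first-step analysis at $0$ with the strong Markov property at successive returns to $0$. Condition on the step taken at time $1$: from $X_0 = 0$ the walker jumps directly to $s$ with probability $p$ (where $p = 1/3$ if $0 \in D$, else $p = 0$), or moves to $\pm 1$ each with probability $\tfrac{1-p}{2}$. Call an \emph{excursion from $0$} the portion of trajectory from time $0$ up to the first $n \geq 1$ with $X_n \in \{0,s\}$. By the strong Markov property at time $1$, conditional on $X_1 = \pm 1$ the remainder of the excursion is distributed as the walk from $\pm 1$ stopped at $\tau$, so the conditional expected excursion length is $1 + \mathbb{E}_{\pm 1}[\tau]$ and the excursion ends at $s$ with conditional probability $p_{\pm}$.

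Summing over the three first-step outcomes yields the unconditional probability $q = p + \tfrac{1-p}{2}p_+ + \tfrac{1-p}{2}p_-$ that a single excursion is absorbed at $s$, together with the expected excursion length
\[
L = p\cdot 1 + \tfrac{1-p}{2}\bigl(1+\mathbb{E}_1[\tau]\bigr) + \tfrac{1-p}{2}\bigl(1+\mathbb{E}_{-1}[\tau]\bigr).
\]
Applying the strong Markov property at each return to $0$ makes the successive excursions i.i.d., so $T$ is a sum of $\mathrm{Geom}(q)$-many such lengths. Equivalently and more directly, conditioning on whether the first excursion ends at $s$ or at $0$ produces the one-step identity
\[
\mathbb{E}_0[T] = L + (1-q)\,\mathbb{E}_0[T],
\]
which, once $q > 0$ is secured, rearranges into $\mathbb{E}_0[T] = L/q$. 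This is \eqref{eq:absorption_formula} (up to the trivial rewriting that absorbs the constant terms inside the $\mathbb{E}_{\pm 1}[\tau]$ expressions).

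The equivalence then follows immediately from the formula: since $q \in (0,1]$ and $p < 1$, one has $\mathbb{E}_0[T] < \infty$ iff $L < \infty$, and $L$ is finite iff both $\mathbb{E}_1[\tau]$ and $\mathbb{E}_{-1}[\tau]$ are finite — the ``only if'' direction using the termwise bound $L \geq \tfrac{1-p}{2}\mathbb{E}_{\pm 1}[\tau]$.

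The two places needing genuine care are: (i) the positivity $q > 0$ when $0 \notin D$ (so $p = 0$), which amounts to showing that the walk from $\pm 1$ reaches $\{0,s\}$ with positive probability; this is immediate from the fact that a simple random walk on $\mathbb{Z}$ started at $1$ hits $0$ almost surely, and the trapped walk only adds absorption opportunities, so $p_+ + p_- > 0$ as soon as the walk is not identically transient to $\pm\infty$; and (ii) the justification of the i.i.d.\ excursion decomposition, i.e.\ that excursions are a.s.\ finite and the renewal identity is valid when some expectations may be infinite. This is most cleanly handled by performing the analysis first on the finite domains $V_n$ of Section~\ref{sec:walk}, where $T_n < \infty$ trivially and the identity $\mathbb{E}_0[T_n] = L_n/q_n$ holds with all finite quantities, and then passing to the limit as $n \to \infty$ via monotone convergence using $T_n \uparrow T$ and the analogous monotonicity of the excursion ingredients under coupling.
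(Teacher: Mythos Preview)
Your proof is correct and follows the same excursion-decomposition strategy as the paper; the only cosmetic difference is that you reach the formula via the renewal identity $\mathbb{E}_0[T]=L+(1-q)\,\mathbb{E}_0[T]$ (justified by the finite-volume limit) where the paper invokes Wald's identity directly. One small slip: in your point~(i), establishing $q>0$ when $p=0$ is \emph{not} the same as showing the walk from $\pm1$ reaches $\{0,s\}$ with positive probability (that is automatic by recurrence); what is needed is that it reaches $s$ \emph{before} $0$ with positive probability, and this holds because $|D|=\infty$ guarantees a trap on at least one side, which the walk visits and gets absorbed at with positive probability before returning to $0$.
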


\begin{proof}
We decompose the sample path into excursions away from $0$. Define successive stopping times:
\[
\tau_0 = 0, 
\quad 
\tau_{k+1} = \inf\{n > \tau_k : X_n \in \{0, s\} \mid X_{\tau_k} = 0\}.
\]
Let 
\[
\mathcal{N} = \inf\{k \geq 1 : X_{\tau_k} = s\}
\]
be the number of excursions before absorption.  
(Caution: $\mathcal{N}$ is not the random variable defined in Proposition~\ref{prop:Decorrelation walk}.)

Each excursion has the structure: $0 \to \{\pm 1\} \rightsquigarrow \{0, s\}$, where the first step goes to $+1$ or $-1$ with probability $\frac{1-p}{2}$ each, or directly to $s$ with probability $p$. By the strong Markov property, excursions are independent. Each excursion ends in absorption with probability
\[
q = p + \tfrac{1-p}{2}\,p_+ + \tfrac{1-p}{2}\,p_-.
\]
Therefore, $\mathcal{N}$ follows a geometric distribution on $\{1,2,\dots\}$ with parameter $q$, so $\mathbb{E}[\mathcal{N}] = 1/q$.

Let $L_k = \tau_k - \tau_{k-1}$ be the length of the $k$-th excursion. The excursions $(L_k)_{k \geq 1}$ are i.i.d. with expected length
\[
\mathbb{E}[L_1] 
= p \cdot 1 \;+\; \tfrac{1-p}{2}\,\mathbb{E}_1[\tau] \;+\; \tfrac{1-p}{2}\,\mathbb{E}_{-1}[\tau],
\]
where the first term corresponds to immediate absorption from $0$, and the other two to excursions starting at $\pm 1$.  

The total absorption time is
\[
T = \sum_{k=1}^{\mathcal{N}} L_k.
\]
By Wald’s equation (valid in $[0,\infty]$),
\[
\mathbb{E}[T] = \mathbb{E}[\mathcal{N}] \, \mathbb{E}[L_1] = \frac{\mathbb{E}[L_1]}{q},
\]
which gives formula~\eqref{eq:absorption_formula}. The equivalence
\[
\mathbb{E}_0[T] < \infty 
\;\Longleftrightarrow\; 
\mathbb{E}_1[\tau] < \infty \;\text{ and }\; \mathbb{E}_{-1}[\tau] < \infty
\]
follows directly.
\end{proof}

\begin{remark}
In \cite{redig2025randomwalksfieldsoft} the traps are assumed to be symmetrically distributed, i.e., $D = \{x_k\}_{k \in \mathbb{Z}}$ with $x_{-k} = x_k$ (in particular $x_0=0$). In this case, the random walk on $\mathbb{Z}$ can be reduced to a walk on $\mathbb{N}$ by considering the process $\lvert X_n \rvert$, which behaves as a reflected random walk at $0$ on $\mathbb{N}$ with transition probabilities
\[
\mathbb{P}(0,1) = 2/3 \qquad \mathbb{P}(x,x\pm 1) = 
\begin{cases} 
1/2 & \text{if } x \notin D, \\
1/3 & \text{if } x \in D,
\end{cases}
\]
and $P(x,s) = 1/3$ for all $x \in D\cap \N$.
We have shown above that this is equivalent to considering the walk on 
\(\{1,2,\dots\}\) with traps at positions \(x_k\) for \(k \ge 1\), 
which is absorbed upon reaching either \(0\) or the sink \(s\).

\end{remark}

\begin{corollary}
\label{cor:trap_independence}
The criticality of the model is independent of whether $0$ is a trap. More generally, adding or removing finitely many traps does not affect the criticality.
\end{corollary}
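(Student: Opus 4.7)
The corollary has two parts: first, that criticality does not depend on whether $0 \in D$; second, that modifying finitely many traps never affects criticality. The plan is to establish the first part as a direct consequence of Theorem~\ref{thm:excursion_equivalence}, and then bootstrap to the second via translation-invariance of the dynamics and a short induction on the number of modified sites.

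For the first part, I would invoke Theorem~\ref{thm:excursion_equivalence}, which characterizes $\mathbb{E}_0[T] < \infty$ by the joint finiteness of $\mathbb{E}_1[\tau]$ and $\mathbb{E}_{-1}[\tau]$, where $\tau = \inf\{n \ge 0 : X_n \in \{0, s\}\}$. The key observation is that the walks used to define these two expectations are killed upon their first visit to $0$, so the transition rule at $0$ is never invoked; only the dynamics on $\mathbb{Z}_{>0}$ and $\mathbb{Z}_{<0}$ matter. Hence $\mathbb{E}_{\pm 1}[\tau]$ take identical values for the trap sets $D$ and $D \triangle \{0\}$, and combining Theorem~\ref{thm:excursion_equivalence} with Theorem~\ref{thm:equivalencewalk} (``criticality is equivalent to $\mathbb{E}_x[T] = \infty$ for some/all $x$'') immediately yields the first assertion.

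For the second part, I would first record the translation-invariance of the dynamics: the transition probabilities $P(x, y)$ depend only on whether $x \in D$, so replacing the pair $(D, x)$ with $(D + c, x + c)$ leaves the law of the walk invariant up to a spatial shift by $c$. In particular, $\mathbb{E}_{x+c}^{D+c}[T] = \mathbb{E}_x^D[T]$, so criticality depends on $D$ only through its translation class. To handle adding or removing a single trap at some $y \in \mathbb{Z}$, I would translate by $-y$ so that the modified site is the origin, and then apply the first part. For a finite symmetric difference $F = D \triangle D'$, this translation-plus-reduction step is iterated, producing an immediate induction on $|F|$.

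The step I expect to require the most care is the translation-invariance argument: one must be careful to distinguish the functions $x \mapsto \mathbb{E}_x^D[T]$ and $x \mapsto \mathbb{E}_x^{D+c}[T]$, which are shifts of one another, from the scalar property of criticality, which is preserved precisely because Theorem~\ref{thm:equivalencewalk} makes it independent of the choice of starting point. Beyond this conceptual point, no computation is required.
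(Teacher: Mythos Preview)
Your proposal is correct and follows essentially the same approach as the paper's proof: the paper observes that the equivalence in Theorem~\ref{thm:excursion_equivalence} depends only on $\mathbb{E}_{\pm1}[\tau]$ (hence not on whether $0\in D$), and then uses Theorem~\ref{thm:equivalencewalk} (criticality is independent of the starting point) to disregard any finite number of traps. Your explicit translation-plus-induction argument is just a more detailed spelling-out of the paper's second sentence.
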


\begin{proof}
Equation~\eqref{eq:absorption_formula} does not depend on the value of $p \in [0,1)$. By Theorem~\ref{thm:equivalencewalk}, criticality does not depend on the starting point, so we may disregard a finite number of traps.
\end{proof}

This result reduces the study of criticality to the two half-lines: it depends solely on the behavior of the trapped random walks on $\mathbb{Z}_{>0}$ and $\mathbb{Z}_{<0}$, considered separately. The system is non-critical precisely when both half-line processes have finite expected absorption times. In the next section, we analyze the case of $\mathbb{Z}_{>0}$, noting that the case of $\mathbb{Z}_{<0}$ follows by symmetry under $x \mapsto -x$.

\section{Expression of expected stopping time and criticality}
The previous theorem allows us to reformulate the criticality in terms of the stopping time of a random walk on $\Z_{>0}$ with probability transition given by \eqref{eq:P(x,y)}.\\

In subsection~\ref{subsec:expressionq_np_n}, we solve the system for $\mathbb{E}_x(T_N)$ with $N$ fixed, which yields Theorem~\ref{thm:q_np_n}. In subsection~\ref{subsec:studyofsequence}, to estimate the sequence $\frac{q_n}{p_n}$, we express it as a barycenter of $x_0,\dots,x_n$. This expression can then be bounded to obtain Theorems~\ref{Weaknonc} and~\ref{Weaknonctwo} in subsection~\ref{subsec:threshold}. Finally, we prove Theorems~\ref{thm:Notc} and~\ref{thm:Strongc}, which confirm the observation in~\cite{redig2025randomwalksfieldsoft} that the ratio $\frac{x_{n+1}}{x_n^2}$ characterizes the criticality of the system.

\label{sec:q_n/p_n}
\subsection{Expression in terms of \(q_n/p_n\)}
\label{subsec:expressionq_np_n}
\textbf{Framework:} We study the random walk on the half-line \(\Z_{>0}\) with traps in \(D_+=\{x_k\}_{k\ge1}\). We define $x_0=0$ which plays the same role as the sink $s$.

We consider the walk truncated to the interval \(\llbracket 1,\, x_N - 1 \rrbracket\), and let \(T_N\) denote the corresponding stopping time. There are two possible interpretations of this truncated walk:
\begin{enumerate}

    \item \textbf{First interpretation:} \(T_N\) is the hitting time of \(s\) for the random walk on \(\llbracket 1,\, x_N - 1 \rrbracket\), with transition probabilities for \(x,y \in \llbracket 1,\, x_N - 1 \rrbracket\) still given by \eqref{eq:P(x,y)}:
    \[
    P(x,y) = 
    \begin{cases}
    0 & \text{if } x = y, \\
    \frac{1}{2} & \text{if } x \sim y \text{ and } x \notin D_+, \\
    \frac{1}{3} & \text{if } x \sim y \text{ and } x \in D_+. \\
    \end{cases}
    \]
    The transition probabilities towards \(s\) are then determined accordingly.

    \item \textbf{Second interpretation:} these transition probabilities can be extended to the whole of \(\N\) (imposing that \(x_0=0\) is an absorbing state), and \(T_N\) becomes the hitting time of the set \(\{x_0,\, x_N,\, s\}\).\\
\end{enumerate}

\medskip

With the second interpretation, the stopping times \(T_N\) are defined on the same space, and \(T = \lim \uparrow T_N\) gives
\[
\E_x(T)
\;=\;
\lim_{N \to \infty} \E_x(T_N).
\]

The aim of the rest of the article is to express \(\E_{x=1}(T_N)\) as a function of the trap sequence \((x_k)\).

\begin{definition}
As in Part 3.1, we define $E(x)= \E_x(T) = \E(T \mid X_0=x) \in [ 0,+\infty ]$. We also adopt the convention $E(x_0)=E(x_N)=0$.
\end{definition}

With this convention, \(E\) satisfies the system of equations~\eqref{eq:systemE(x)} for \(x \in \llbracket 1, x_N-1 \rrbracket\), which can be rewritten as:
\begin{equation}
\begin{cases}
E(x+1)-2E(x)+E(x-1)=-2 & \text{if }  x\not \in D ~(*), \\
E(x+1)-3E(x)+E(x-1)=-3 & \text{if } x \in D ~(**).
\end{cases}
\end{equation}

~

Let $E_k = E(x_k)$ where $E_0 = E(0) = 0$. The boundary conditions are $E_0 = E_N = 0$. Equation $(**)$ can be interpreted as a transmission condition on the traps.\\

We have already solved equation $(*)$:
\begin{equation} 
\label{eq:E(x)}
\forall x \in \llbracket x_k,x_{k+1} \rrbracket, ~ E(x)= -x^2+A_kx+B_k \text{ with $A_k, B_k$ real constants.}
\end{equation}

In particular, $E_0=B_0=0$.

By continuity (expression of $E(x)$ for $x=x_{k} \in \llbracket x_{k-1},x_{k} \rrbracket \cap \llbracket x_k,x_{k+1} \rrbracket$):
\[ \forall k \in \llbracket 1, n  \rrbracket, ~A_kx_k+B_k = A_{k-1}x_k + B_{k-1}.\]

Moreover, by the transmission condition \((**)\), we have
\[
E(x_{k+1}) - 3 E(x_k) + E(x_{k-1}) = -3.
\]

\noindent
We have
\[
E(x_{k}+1) - E(x_k) = -2 x_k - 1 + A_k,
\quad \text{and} \quad
E(x_{k}-1) - E(x_k) = 2 x_k - 1 - A_{k-1}.
\]
Hence,
\[
A_k - A_{k-1} = E_k - 1.
\]

Combining these two expressions, we obtain the following relations:

\[
\begin{cases}
A_k-A_{k-1}=E_k-1, \\
B_k-B_{k-1}=-x_k(E_k-1).
\end{cases}
\]

Finally: 

\[A_k =A_0 + \sum_{i=1}^k(E_i-1), ~~~B_k= -\sum_{i=1}^kx_i(E_i-1). \]

Noting $A = A_0 \in \R$ and using the relation~\eqref{eq:E(x)}, $E_k=-x_k^2+A_{k-1}x_k+B_{k-1}$ for $k\geq1$, thus, we can express:
\begin{equation}
\label{eq:E_k}
E_k=-x_k^2+Ax_k+\sum_{i=1}^{k-1}(E_i-1)(x_k-x_i).
\end{equation}

That is:
\begin{align*}
E_1 &= -x_1^2 + A x_1 = A - 1 \\
E_2 &= -x_2^2 + A x_2 + (E_1 - 1)(x_2 - x_1) \\
&\vdots \\
E_N &= -x_N^2 + A x_N + (E_1 - 1)(x_N - x_1) + \dots + (E_{N-1} - 1)(x_N - x_{N-1}).
\end{align*}

This system admits a unique solution by adding the edge condition $E_N=0$. In particular, we are interested in the variable $A=A^{(N)}=E_1+1$:\textbf{ the system is critical if and only if $\displaystyle \lim_{N \to \infty} A^{(N)} < \infty$}. \textbf{Note that $A^{(N)}$ is increasing with $N$ since the expectation of the truncated stopping time $T_N$ is}.

It naturally follows that $E_k$ can be expressed affinely in $A$: $E_k=p_kA-q_k$, so that $E_N=0$ gives $A=A^{(N)}=\frac{q_N}{p_N}$.

\textbf{Expression of $p_n$ and $q_n$:}.

\noindent
By definition: $p_1=x_1, ~q_1=x_1^2$

We define $p_k$ and $q_k$ by induction via: 
\begin{align*}
E_k&=-x_k^2+Ax_k+\sum_{i=1}^{k-1}(E_i-1)(x_k-x_i)\\
&=-x_k^2+Ax_k+\sum_{i=1}^{k-1}(p_iA-q_i-1)(x_k-x_i)\\
&= A\underbrace{\left[x_k+\sum_{i=1}^{k-1}p_i(x_k-x_i)\right]}_{p_k}-\underbrace{\left[x_k^2+\sum_{i=1}^{k-1}(q_i+1)(x_k-x_i)\right]}_{q_k}.
\end{align*}


We have proved the following:

\Sequence*

\subsection{Study of \(\displaystyle\lim_{n \to \infty} \frac{q_n}{p_n}\) }
\label{subsec:studyofsequence}
Consider the definition of $p_n$ and $q_n$ in \eqref{eq:defp_nq_n} and let $a_n=\frac{q_n}{p_n}$. 
\textbf{The model is critical if and only if $\lim_{n\to \infty}a_n = \infty$.}
Let us rewrite the recurrence relations of $(p_n)$ and $(q_n)$ as a matrix expression: we define the matrix $M \in \mathcal{M}_{n}(\R)$ by 
\begin{equation}
\label{eq:defM}
M_{i,j} =
\begin{cases}
x_i - x_j & \text{if } i > j,\\
0 & \text{otherwise,}
\end{cases}
\quad i,j \in \llbracket 1,n \rrbracket.
\end{equation}

Let
\[
X = (x_i)_{1 \leq i \leq n}, \quad X^{(2)} = (x_i^2)_{1 \leq i \leq n},  
\quad P = (p_i)_{1 \leq i \leq n}, \quad Q = (q_i)_{1 \leq i \leq n}, \quad \mathbf{1} =(1)_{1 \leq i \leq n}.
\]

Hence, the recurrence relations can be rewritten as:

\[
P=X+MP, \quad Q=X^{(2)} +M\left(Q +\mathbf{1}\right ).
\]

Since $M$ is a strict lower triangular of size $n \times n$:

\[
P=\left(\sum_{k=0}^{n-1} M^k\right) X, \quad Q=\left(\sum_{k=0}^{n-1} M^k\right)\left(X^{(2)}+\mathbf{1}\right)-\mathbf{1}.
\]

This gives us an explicit expression for $p_n$ and $q_n$, if we define $W_k^{(n)}=\sum_{m=0}^{n-1} (M^m)_{n,k}$ for $1\le k \le n$, then:

\[ p_n=\sum_{k=1}^{n} W_k^{(n)} x_k, \quad q_n= -1+\sum_{k=1}^n W_k^{(n)} (x_k^2+1).\]

Finally, denoting \(\mu_k = \frac{x_k W_k^{(n)}}{p_n}\) (which depends on \(n\)), we observe that:
\begin{equation}
\label{eq:sommemu_nvaut1}
\sum_{k=1}^n \mu_k = 1
\quad \text{and} \quad
a_n = \frac{q_n}{p_n} = \varepsilon_n + \sum_{k=1}^n \mu_k x_k,
\end{equation}
where
\[
\varepsilon_n = \frac{-1 + \sum_{k=1}^n W_k^{(n)}}{\sum_{k=1}^n W_k^{(n)} x_k}.
\]

Moreover,
\[
-1 \leq \varepsilon_n \leq 1,
\]
since \(x_k \geq 1\) and \(p_n \geq 1\).

\noindent
We set $u_n = \sum_{k=1}^n \mu_k x_k = a_n - \varepsilon_n.$ \textbf{The model is critical if and only if $\lim_{n \to \infty} u_n = \infty$}.

The goal is thus to study the barycentric coefficients \(\mu_k\) with sufficient precision.

We now give an explicit expression for \( W_k^{(n)} \). 
Since \(M\) is strictly lower triangular, we have
\begin{align*}
W_k^{(n)} &= \sum_{m=0}^{n-1} (M^m)_{n,k} \\
&= \sum_{m=0}^{n-1} \sum_{n = j_0 > j_1 > \dots > j_m = k} \prod_{r=1}^m M_{j_{r-1}, j_r} \\
&= \sum_{m=0}^{n-1} \sum_{k = i_0 < i_1 < \dots < i_m = n} \prod_{r=1}^m (x_{i_r} - x_{i_{r-1}}),
\end{align*}
where the indices are related by \(i_p = j_{m-p}\).

The term $m=0$ is equal to $0$ if $k<n$ (empty sum) and $1$ if $k=n$ (empty product).
We deduce that the \( W_k^{(n)} \)satisfy the following recurrence relation (summing according to the first intermediate index $i_1$):
\[
\forall k < n, \quad W_k^{(n)} = \sum_{j=k+1}^n W_j^{(n)} (x_j - x_k).
\]

For coefficients \( \mu_k \), this relation leads to the following identity:
\begin{equation}
\label{eq:relationmu_j}
\mu_k = x_k \sum_{j=k+1}^n \mu_j \left(1 - \frac{x_k}{x_j}\right).
\end{equation}

\textbf{Bounds:} We can now estimate the value of \(u_n=\sum_{k=1}^n\mu_k x_k\) to see whether the system is critical.
Let us define
\begin{equation}
\label{eq:defR_k}
R_k = \sum_{j=k+1}^n \mu_j,
\end{equation}
which implies \(R_{0} = 1\), $R_n=0$ and \(\mu_k = R_{k-1} - R_k.\) \textbf{Note that both the \(\mu_k\) and \(R_k\) depend implicitly on \(n\). However, we will bound them by quantities that no longer depend on \(n\).} The main challenge will then be to study the convergence of the infinite series obtained by summing these bounds from \(k = 1\) to infinity.

Observe that
\[
\sum_{k=1}^n \mu_k x_k = \sum_{k=1}^n (R_{k-1} - R_k) x_k = x_1 + \sum_{k=1}^n R_k (x_{k+1} - x_k).
\]

\begin{lemma}
\label{lemma:5.2}
    The system is critical if and only if 
    \[
    \sum_{k=1}^n R_k (x_{k+1} - x_k) \to \infty \quad \text{as } n \to \infty.
    \]
\end{lemma}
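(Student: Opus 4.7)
The plan is to derive the stated equivalence by a single Abel summation applied to the identity $u_n = \sum_{k=1}^n \mu_k x_k$, exploiting the relation $\mu_k = R_{k-1} - R_k$ together with the boundary values $R_0 = 1$ and $R_n = 0$ coming from~\eqref{eq:defR_k} and~\eqref{eq:sommemu_nvaut1}. Since it has already been established (from $a_n = \varepsilon_n + u_n$ together with $|\varepsilon_n| \leq 1$) that the model is critical if and only if $u_n \to \infty$, it suffices to show that $u_n$ and $\sum_{k=1}^n R_k(x_{k+1}-x_k)$ differ by a bounded quantity, namely the constant $x_1$.

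Concretely, I would expand
\[
\sum_{k=1}^n \mu_k x_k = \sum_{k=1}^n (R_{k-1} - R_k)\, x_k = \sum_{j=0}^{n-1} R_j\, x_{j+1} \;-\; \sum_{k=1}^n R_k\, x_k
\]
by reindexing the first sum. Isolating the boundary contributions $R_0 x_1$ and $R_n x_n$ and regrouping the common range $k = 1, \dots, n-1$ yields
\[
u_n \;=\; R_0 \, x_1 \;+\; \sum_{k=1}^{n-1} R_k\,(x_{k+1} - x_k) \;-\; R_n\, x_n \;=\; x_1 \;+\; \sum_{k=1}^{n} R_k\,(x_{k+1} - x_k),
\]
where the last equality uses $R_0 = 1$, $R_n = 0$ and the harmless extension of the upper summation index to $k = n$ (the extra term vanishes regardless of how $x_{n+1}$ is defined). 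Since $x_1$ is a fixed constant independent of $n$, the sequence $u_n$ diverges to $+\infty$ if and only if $\sum_{k=1}^n R_k(x_{k+1} - x_k)$ does, and combining this with the already-recorded equivalence between criticality and $u_n \to \infty$ concludes the proof.

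There is no serious obstacle here: the argument is a straightforward discrete integration by parts. The only subtlety worth flagging is the one emphasized in the text just before the lemma statement, namely that both $\mu_k$ and $R_k$ depend implicitly on $n$, so the identity above holds only for each fixed $n$ and the limit must be interpreted along the corresponding $n$-dependent sequence of partial sums. This mild subtlety will matter later, when one needs to replace the $R_k$ by bounds that no longer depend on $n$, but at the level of Lemma~\ref{lemma:5.2} itself nothing beyond Abel summation is required.
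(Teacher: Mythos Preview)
Your argument is correct and matches the paper's approach exactly: the Abel summation identity $\sum_{k=1}^n \mu_k x_k = x_1 + \sum_{k=1}^n R_k(x_{k+1}-x_k)$ is precisely what is recorded just above the lemma statement, and the conclusion follows from the already established equivalence between criticality and $u_n\to\infty$.
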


The equation~\eqref{eq:relationmu_j} will be used to estimate \( R_k (x_{k+1} - x_k) \) independently of \( n \).

\subsection{Criticality near the geometric growth threshold}
\label{subsec:threshold}

We first prove Theorem~\ref{Weaknonc} and then Theorem~\ref{Weaknonctwo}. 
For this we recall a standard description of real linear homogeneous recurrence sequences of order $k$:

Let $(\alpha_n)_{n\ge0} \in \R^\N$ satisfy
\[
a_0\alpha_n+a_1\alpha_{n+1}+\cdots+a_k\alpha_{n+k}=0\qquad(\forall n\ge0),
\]
with real coefficients $a_0,\dots,a_k$ not all zero, and let
\[
P(X)=a_0+a_1X+\cdots+a_kX^k
\]
be the characteristic polynomial. Write the distinct complex roots of $P$ as $\rho_0,\dots,\rho_p$. Then there exist (complex) polynomials $Q_j$ of degree strictly less than the multiplicity of $\rho_j$ such that
\[
\alpha_n=\sum_{j=0}^p Q_j(n)\,\rho_j^n\qquad(\forall n\ge0).
\]

Consequently, if $\rho_0$ is a simple root, $|\rho_j|<|\rho_0|$ for all $j\ne0$, and $Q_0(n)\equiv C\in\mathbb R$ with $C\neq0$, then
\[
\alpha_n\sim C\,\rho_0^n\quad\text{as }n\to\infty.
\]

To prove Theorem~\ref{Weaknonc}, we first need to prove the following technical lemma:
\begin{lemma}
\label{lemma:recurrentsequence}
Let $(\alpha_n)_{n \ge 0}$ satisfy
\[
\alpha_{n+k+1} = a_0 \alpha_n + \dots + a_k \alpha_{n+k}, \quad a_0,\dots,a_k>0,
\]
with $\alpha_0,\dots,\alpha_k>0$.  
Then the characteristic polynomial $P(X) = X^{k+1}-a_kX^k-\dots-a_0$ has a unique positive root $\rho_0$ and
\[
\alpha_n \sim C\,\rho_0^n \qquad \text{for some $C>0$.}
\]

\end{lemma}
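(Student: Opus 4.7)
The plan is to verify the three conditions on the characteristic polynomial needed to invoke the general description of linear recurrent sequences recalled just above: namely, $\rho_0$ exists and is a simple root, it strictly dominates every other complex root in modulus, and the coefficient $C$ it carries in the decomposition of $\alpha_n$ is strictly positive. The first two points are standard Perron–Frobenius type arguments adapted to the fact that $P(X)=X^{k+1}-a_kX^k-\cdots-a_0$ has all its non-leading coefficients of the \emph{same} sign. The third point, which will be the main obstacle, requires exploiting positivity of the initial conditions through a conservation-type substitution.

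For the first point, I would study the auxiliary function $f(x) = P(x)/x^{k+1} = 1 - a_k/x - \cdots - a_0/x^{k+1}$ on $(0,\infty)$. Each term $-a_j/x^{k+1-j}$ is strictly increasing (as $a_j>0$), so $f$ is strictly increasing, with $f(x)\to -\infty$ as $x\to 0^+$ and $f(x)\to 1$ as $x\to\infty$. The intermediate value theorem then yields a unique $\rho_0>0$ with $f(\rho_0)=0$, equivalently $P(\rho_0)=0$. Since $P'(\rho_0)=(k+1)\rho_0^k f(\rho_0)+\rho_0^{k+1}f'(\rho_0)=\rho_0^{k+1}f'(\rho_0)>0$, the root $\rho_0$ is simple.

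For the second point, let $\rho\in\mathbb{C}$ be any root of $P$. From $\rho^{k+1}=\sum_{j=0}^{k} a_j\rho^j$ and the triangle inequality, $|\rho|^{k+1}\le\sum_{j=0}^{k}a_j|\rho|^j$, so $f(|\rho|)\le 0$. Monotonicity of $f$ gives $|\rho|\le\rho_0$. To upgrade this to a strict inequality when $\rho\neq\rho_0$, I would look at the equality case of the triangle inequality: if $|\rho|=\rho_0$, then all complex numbers $a_j\rho^j$ must be nonnegative real multiples of one another. Since $a_0>0$ contributes a strictly positive real value, this forces each $\rho^j$ to be a nonnegative real number, hence $\rho\ge 0$; combined with $P(0)=-a_0\ne 0$ and the uniqueness established in the previous paragraph, this forces $\rho=\rho_0$.

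For the third point, write $\beta_n=\alpha_n/\rho_0^n$. Dividing the recurrence by $\rho_0^{n+k+1}$ gives
\[
\beta_{n+k+1}=\sum_{j=0}^{k} b_j\,\beta_{n+j},\qquad b_j:=\frac{a_j}{\rho_0^{k+1-j}}.
\]
The weights $b_j$ are strictly positive, and the identity $P(\rho_0)=0$ rewrites as $\sum_{j=0}^{k} b_j = 1$. Thus $\beta_{n+k+1}$ is a convex combination of $\beta_n,\dots,\beta_{n+k}$, so an immediate induction shows $\min_{0\le j\le k}\beta_j \le \beta_n \le \max_{0\le j\le k}\beta_j$ for all $n\ge 0$. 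The lower bound $m := \min(\beta_0,\dots,\beta_k)$ is strictly positive because $\alpha_0,\dots,\alpha_k>0$. On the other hand, from the two earlier points, the general decomposition yields $\beta_n = C + \sum_{\rho_j\ne\rho_0} Q_j(n)(\rho_j/\rho_0)^n \to C$ as $n\to\infty$. Passing to the limit in $\beta_n\ge m>0$ gives $C\ge m>0$, concluding $\alpha_n\sim C\rho_0^n$ with $C>0$. The key and only non-routine step is this convex-combination identity, whose availability hinges precisely on the hypothesis that all coefficients $a_j$ are positive.
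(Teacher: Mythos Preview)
Your proof is correct and follows essentially the same approach as the paper: the same auxiliary function $f(x)=P(x)/x^{k+1}$ for existence, uniqueness and simplicity of $\rho_0$; the same triangle-inequality argument for strict dominance; and the same convex-combination trick with $\beta_n=\alpha_n/\rho_0^n$ to force $C>0$. Your treatment of the equality case in the triangle inequality (using that $a_0>0$ forces all $\rho^j$ to be nonnegative reals) is slightly more explicit than the paper's, but otherwise the arguments coincide.
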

\begin{proof}
First, let us prove that $P$ has a unique positive root $\rho_0$ and that all other roots $\rho_j$ satisfy $|\rho_j| < \rho_0$. Define $f(x) = P(x)/x^{k+1}$ for $x>0$. Then $f$ is strictly increasing, $\lim_{x\to0^+} f(x)=-\infty$, $\lim_{x\to\infty} f(x)=1$, so $f$ has a unique zero $\rho_0>0$, which is simple since $f'>0$.

If $\rho_j$ is another root, the triangle inequality gives $|\rho_j|^{\,k+1} \le a_0 + \dots + a_k |\rho_j|^k$, i.e., $f(|\rho_j|)\le0$, hence $|\rho_j|\le \rho_0$. Equality in the triangle inequality would force $\rho_j$ positive (because $a_0$ and $a_1\rho_j$ would be positively colinear), contradicting uniqueness, so $|\rho_j|<\rho_0$.

Now, write $\alpha_n = C \,\rho_0^n + \sum_j Q_j(n)\,\rho_j^n$ with $|\rho_j|<\rho_0$. We just need to show $C\neq 0$. Let $\beta_n := \alpha_n/\rho_0^n$. Then
\[
\rho_0^{k+1} = a_0 + \dots + a_k \rho_0^k, \qquad
 \beta_{n+k+1}\rho_0^{k+1} = a_0 \beta_n + \dots + a_k \beta_{n+k}\rho_0^{k}.
\]
Dividing the second equality by the first, we see that $\beta_{n+k+1}$ is a convex combination of $\beta_n,\dots,\beta_{n+k}$, hence $\beta_n \ge \min(\beta_0,\dots,\beta_k) >0$ for all $n$. Thus $C>0$, since otherwise $\beta_n$ would tend to $0$.

\end{proof}

\Weaknonc*
\begin{proof}
The goal is to find the best possible upper bound for \( R_k \) in the most general setting.  
Since \( (x_k) \) is an increasing sequence of integers, we have \( x_k \geq k \) and, for any \( j > k \),  
\(
x_j \geq x_k + (j - k).
\)
Thus, in \eqref{eq:relationmu_j}, we can bound from below the term  
\[
x_k \left(1 - \frac{x_k}{x_j}\right) \geq x_k \left(1 - \frac{x_k}{x_k + (j - k)}\right) = \frac{x_k (j - k)}{x_k + (j - k)} \geq \frac{k (j - k)}{j}.
\]

where the last inequality follows since $\frac{ab}{a+b} = \frac{b}{1 + a/b}$ decreases in $a$ for $a,b>0$.

Equation \eqref{eq:relationmu_j} then turns into (recall that $\mu_k = R_{k-1}-R_k$ and $R_n=0$):
\begin{equation}
\label{eq:inequalityR_k}
R_{k-1}-R_k \geq \sum_{j=k+1}^n \frac{k(j-k)}{j}(R_{j-1}-R_j)= \sum_{j=k}^n\frac{k^2}{j(j+1)}R_j.
\end{equation}

\noindent
The key idea is to approximate the actual relation by the idealized equality (valid for \(j \approx k \gg 1\)):
\begin{equation}
\label{eq:R_krecurrentrelation}
R_{k-1} - R_k = \sum_{j=k}^n R_j,
\end{equation}
which leads to the approximate solution
\[
R_k \approx C \, \phi^{\,n-k} R_{n-1}
\]
for some constant \(C > 0\), where \(\phi = \frac{3 + \sqrt{5}}{2}\). Indeed, defining \(Q_k = \sum_{j=n-k}^n R_j\), we observe that
\[
(Q_{k+1} - Q_k) - (Q_k - Q_{k-1}) = R_{n-k-1}-R_{n-k}=Q_k,
\]
where the second equality follows from equation~\eqref{eq:R_krecurrentrelation}. Thus,
\[
Q_{k+2} = 3 Q_{k+1} - Q_k.
\]
This second-order linear recurrence has characteristic roots \(\phi_+ = \phi\) and \(\phi_- = \frac{3 - \sqrt{5}}{2} \in (0,1)\), which implies $Q_k = \widetilde{C}\phi^k +o(1)$, $R_{k} = Q_{n-k}-Q_{n-k+1} \approx C\phi^k$ for $k\ll n$. Evaluating at \(k=-1\) gives, for \(j < n\),

\[
R_j \approx \frac{C'}{\phi^j}.,
\]
We would then conclude by lemma~\ref{lemma:5.2}: if $x_k = O(\rho^k)$, $\rho<\phi$, then $R_k x_{k+1}$ is summable.\\

To treat the real case, we introduce parameters \(0<\lambda<1\) — a uniform lower bound for \(\tfrac{k^2}{j(j+1)}\) on the indices retained in \eqref{eq:inequalityR_k} — and \(\kappa\in\mathbb{Z}_{>0}\), the number of indices kept in the sum. We will let $\lambda\to1$, and $\kappa \to \infty$ to approach the idealized case given by equation \eqref{eq:R_krecurrentrelation}.

There exists \(k_0\) such that for all \(k \geq k_0\),
\[
\frac{k^2}{(k+\kappa)(k+\kappa+1)} \geq \lambda.
\]
Hence, by inequality~\eqref{eq:inequalityR_k}, for \(k_0 \leq k \leq n\),
\[
R_{k-1} \geq R_k + \lambda \sum_{j=k}^{\min(k+\kappa,n)} R_j.
\]
Let $p \geq k_0$, $n >p$. We will bound from above $R_p$ independently of $n$. For all $k\leq p$,
\[
R_{k-1} \geq R_k + \lambda \sum_{j=k}^{\min(k+\kappa,p)} R_j.
\]
Define \((\alpha_k)\) by \(\alpha_0 = 1\) and
\[
\alpha_{k+1} = \alpha_k + \lambda \sum_{j=\max(0,k-\kappa)}^{k} \alpha_j.
\]
Then, by induction for all \(k \leq p - k_0\),
\[
R_{p-k} \geq \alpha_k R_p.
\]

The sequence \((\alpha_k)\) satisfies a homogeneous linear recurrence relation of order \(\kappa + 1\) for \(k \geq \kappa\) with characteristic polynomial
\[
P(X)= X^{\kappa + 1} - X^\kappa - \lambda (X^\kappa + \cdots + 1).
\]
We may apply Lemma~\ref{lemma:recurrentsequence}: $P$ has a unique positive root that we denote by $\phi_{\kappa,\lambda}$. Clearly, $\phi_{\kappa,\lambda}\geq 1+\lambda$ and by the lemma,
\[
\alpha_k\sim C\,\phi_{\kappa,\lambda}^k\qquad (k\to\infty)
\]
for some \(C>0\).

For all \(n \geq p\),
\begin{equation}
\label{eq:R_pinferiorto}
R_p \leq \frac{R_{k_0}}{\alpha_{p-k_0}} \leq \frac{C'}{\phi_{\kappa,\lambda}^p}
\end{equation}
for some \(C' > 0\) ($R_{k_0}\leq 1$ by \eqref{eq:sommemu_nvaut1}).

By definition, \(\phi_{\kappa,\lambda}>1\) satisfies
\[
\phi_{\kappa,\lambda}^{\kappa+1}
= \phi_{\kappa,\lambda}^\kappa
+ \lambda\frac{\phi_{\kappa,\lambda}^{\kappa+1}-1}{\phi_{\kappa,\lambda}-1},
\]
which is equivalent to
\[
\phi_{\kappa,\lambda}^2 - (2+\lambda)\,\phi_{\kappa,\lambda} + 1
= -\frac{\lambda}{\phi_{\kappa,\lambda}^\kappa}.
\]

Recall that $\phi_{\kappa,\lambda} > 1+\lambda$, therefore for \(\lambda\ge\tfrac12\),
\[
\frac{\lambda}{\phi_{\kappa,\lambda}^\kappa}
\le\frac{\lambda}{(1+\lambda)^\kappa}
\le\Bigl(\tfrac23\Bigr)^{\kappa-1}
\;\longrightarrow\;0
\quad(\kappa\to\infty).
\]
Taking \(\kappa \to \infty\) gives
\[
\phi_{\kappa,\lambda}= \frac{2 + \lambda + \sqrt{(2+\lambda)^2-4(1+\frac{\lambda}{\phi_{\kappa,\lambda}^\kappa})}}{2} \to \phi_\lambda := \frac{2 + \lambda + \sqrt{\lambda^2 + 4 \lambda}}{2}.
\]
Letting \(\lambda \to 1\), we recover \(\phi_\lambda \to \phi\).

Thus, for any \( \rho < \phi\), there exist \(\kappa, \lambda,  \rho'\) such that
\[
 \rho <  \rho' < \phi_{\kappa,\lambda}.
\]
Hence, by inequality~\eqref{eq:R_pinferiorto}, for all $p\geq k_0$,
\[
R_p \leq \frac{C'}{ \rho'^p}.
\]
Finally,
\[
\sum_{k=1}^n R_k (x_{k+1} - x_k) \leq \sum_{k=1}^n R_k x_{k+1} \leq \sum_{k=1}^{k_0-1}x_{k+1}+C' \sum_{k=k_0}^\infty x_{k+1}  \rho'^{-k}.
\]
Thus, if $x_k = O(\rho^k)$ the series converges and by lemma~\ref{lemma:5.2}, the model is non-critical.
\end{proof}

\noindent
Next, we construct counterexamples for values above the critical threshold:

\Weaknonctwo*

\begin{proof}
Let $y_k=\lambda_k \cdot k\left(\dfrac{3+\sqrt5}{2}\right)^k$ be an increasing sequence of integers with $\lambda_k \to \infty$ as $k \to \infty$.
We construct the trap sequence \( x_k \) by setting \( x_{k+1} = x_k + 1 \) for most indices, except at certain isolated steps where \( x_{k+1} \gg x_k \). At these special indices, we will have \( R_k x_{k+1} \gg 1 \). The key point is to ensure that this construction remains below \( y_k \) at every step.

We set \( x_1 =1, ~ x_2 = 2 \), and assume \( x_k \) has been defined for all \( k \leq a+1 \), with \( a \geq 1 \). Let \( b > a+1 \), which will be chosen sufficiently large later. For \( a+1<k < b \), define \( x_{k+1} = x_k + 1 \). At step \( k = b \), we will choose \( x_{b+1} \) such that \( R_b x_{b+1}  \gg 1 \).

For \(k\ge a+1\) equation \eqref{eq:relationmu_j} gives (recall that $\mu_k = R_{k-1}-R_k$)
\[
R_{k-1}-R_k=\sum_{j=k+1}^n x_k\Big(1-\frac{x_k}{x_j}\Big)(R_{j-1}-R_j).
\]
Using \(x_j=x_k+(j-k)\) for \(j\le b\) and \(\dfrac{x_k}{x_k+(j-k)}\le1\),
\[
R_{k-1}-R_k\le\sum_{j=k+1}^b (j-k)(R_{j-1}-R_j)+x_kR_b.
\]
By index manipulation,
\[
\sum_{j=k+1}^b (j-k)(R_{j-1}-R_j)=\sum_{m=k}^{b-1}R_m-(b-k)R_b\le\sum_{m=k}^{b-1}R_m,
\]
hence, since \(x_k\le x_b\),
\[
R_{k-1}-R_k  \le \sum_{m=k}^{b-1}R_m + x_bR_b.
\]

Thus, by induction for $k\leq b-a-1$ \( R_{b-k} \leq \alpha_k x_b R_b \), where \(\alpha_k\) is the sequence defined by \( \alpha_0 = 1 \) and for $k\ge0$
\[
\alpha_{k+1} = \sum_{j=0}^k \alpha_j + \alpha_k.
\]
Notice that this relation is the same that in the previous proof with $\lambda =1$ and $\kappa=\infty$

As previously noted, \( \alpha_k \sim C \phi^k \) for some constant \( C > 0 \), where \( \phi = \frac{3 + \sqrt{5}}{2} \). Indeed, the sequence \( \beta_k := \sum_{j=0}^k \alpha_j \) satisfies a linear recurrence of order 2 with characteristic polynomial $X^2-3X+1$.

It follows that
\[
R_b \geq \frac{C' R_a}{x_b \phi^b} \geq \frac{K}{b \phi^b}, \quad \text{for constants } C', K > 0.
\]
The second inequality follows from the asymptotic relation \(x_b \sim b\) as \(b \to \infty\) and from the fact that 
\(R_a \ge \prod_{k=1}^a \frac{1}{1+x_k} > 0,\)
which is independent of \(n\) (see inequality~\eqref{eq:boundR_k}, which directly follows from relation~\eqref{eq:relationmu_j}).

Now, choose \( b > a + 1 \) large enough so that \( \dfrac{K}{b \phi^b} = \dfrac{K \lambda_b}{y_b} \geq \dfrac{\sqrt{\lambda_b}}{y_b} \). Then set \( x_{b+1} = y_b > x_b \), so that
\[
\sum_{k=0}^n \mu_k x_k \geq R_b x_{b+1} \geq \sqrt{\lambda_b}, \quad \text{for all } n > b.
\]

In conclusion, \( \sum_{k=0}^n \mu_k x_k \to \infty \) as \( n \to \infty \), and the system is critical, while \( x_k \leq y_k \) for all \( k \ge 1\).
\end{proof}

\subsection{Characterization via the ratio \(x_{n+1}/x_n^2\)}
\label{subsec:characterization ratio}
First, equation~\eqref{eq:relationmu_j} gives the following bounds on \(\mu_k\), using the fact that the sequence \((x_k)\) is increasing:
\begin{equation}
x_k\Bigl(1 - \frac{x_k}{x_{k+1}}\Bigr) R_k \;\leq\; \mu_k \;\leq\; x_k R_k.
\end{equation}
As a consequence, we obtain bounds directly on \(R_k\) in terms of \(R_{k-1}\):
\begin{equation}
\frac{R_{k-1}}{1 + x_k} \;\leq\; R_k \;\leq\; \frac{R_{k-1}}{1 + x_k\Bigl(1 - \frac{x_k}{x_{k+1}}\Bigr)}.
\end{equation}

By induction on $k$, we then derive the following bounds for $R_k$ :
\begin{equation}
\label{eq:boundR_k}
\prod_{r=1}^{k} \frac{1}{1 + x_r}
\;\leq\;
R_k
\;\leq\;
\prod_{r=1}^{k} \frac{1}{1 + x_r\left(1 - \frac{x_r}{x_{r+1}}\right)}.
\end{equation}
First, we will look at the upper bound:
\Strongnonc*
\begin{proof}
By Lemma~\ref{lemma:5.2}, it suffices to determine conditions under which
\[
\sum_{k=1}^n R_k (x_{k+1} - x_k) \leq \sum_{k=1}^n R_k x_{k+1}
\]
remains bounded as \( n \to \infty \).

\noindent
By equation \eqref{eq:boundR_k}, we have
\[
\sum_{k=1}^n R_k x_{k+1} \leq \sum_{k=1}^\infty x_{k+1} \prod_{r=1}^k \frac{1}{1 + x_r \left(1 - \frac{x_r}{x_{r+1}}\right)}.
\]

Applying the d'Alembert ratio test, the series converges if
\begin{equation}
\label{convergencelimsup}
\limsup_{k \to \infty} \frac{x_{k+1}}{x_k \bigl(1 + x_k \bigl(1 - \frac{x_k}{x_{k+1}}\bigr)\bigr)} < 1.
\end{equation}

Fix \(x_k\) and define the function
\begin{equation}
f_k(x) \;=\; \frac{x}{x_k\bigl(1 + x_k(1 - \tfrac{x_k}{x})\bigr)}.
\end{equation}
A straightforward computation of \(f_k'(x)\) shows that \(f_k\) is strictly decreasing on \(\bigl(x_k+1,\tfrac{2x_k^2}{x_k+1}\bigr)\) and strictly increasing for \(x>\tfrac{2x_k^2}{x_k+1}\). In particular,
\[
f_k(x_k+1)
=\frac{{(x_k+1)}^2}{x_k(2x_k+1)}
\le\frac{9}{10}
<1
\]
for $k\geq1$, because $x_k \geq2$ and the expression is decreasing in $x_k$.

Hence, for any fixed $\alpha \in (0.9,1)$,
\[
f_k(x_{k+1}) \le \alpha \quad \Longleftrightarrow \quad x_{k+1} \le x_+(k,\alpha),
\]
where $x_+=x_+(k,\alpha)$ is the larger solution of $f_k(x_+)=\alpha$ meaning the larger root of
\[
 x^2 - \alpha x_k (1+x_k) x + \alpha x_k^3 = 0.
\]
An asymptotic expansion gives $x_+ \sim \alpha\, x_k^2$ as $k \to \infty$.  

Assume $\limsup_{k \to \infty} \frac{x_{k+1}}{x_k^2} < 1$. Then there exists $0.9 < \widetilde{\alpha} < 1$ such that, for $k$ large enough, $x_{k+1} \le \widetilde{\alpha} x_k^2$. Pick $\alpha$ satisfying $\widetilde{\alpha} < \alpha < 1$, and let $x_+(k,\alpha)$ be the corresponding root. For $k$ sufficiently large, $x_+ \ge \widetilde{\alpha} x_k^2 \ge x_{k+1}$, which implies $f_k(x_{k+1}) \le \alpha$ .  

Hence,
\[
\limsup_{k \to \infty} f_k(x_{k+1}) < 1.
\]

We have proved~\eqref{convergencelimsup}, so the model is non-critical.

\end{proof}

\textbf{Example~\ref{ex:noncritical}:}
\begin{itemize}
  \item If \( x_k \sim a^{b^k} \), with \( a > 1 \), \( 1 < b < 2 \), then :
  \[
  \frac{x_{k+1}}{x_k^2} \sim a^{b^k(b - 2)} \to 0.
  \]
  
  \item If \( x_k \sim \lambda a^{2^k} \), with \( a > 1 \), \( \lambda > 1 \), then :
  \[
  \frac{x_{k+1}}{x_k^2} \to \frac{1}{\lambda} < 1.
  \]
\end{itemize}

We now focus on the lower bound:
\Strongc*
\begin{proof} 
It follows from the lower bound~\eqref{eq:boundR_k} that
\[
R_k (x_{k+1}-x_k)
\;\ge\;
(x_{k+1}-x_k)\prod_{r=1}^{k}\frac{1}{1 + x_r}
\;\ge\;
x_{k+1}\prod_{r=1}^{k}\frac{1}{1 + x_r}
-\frac{1}{k!}.
\]
The result then follows directly from Lemma~\ref{lemma:5.2}.
\end{proof}

The d'Alembert ratio test gives the following corollary (using \(
\liminf \frac{x_{n+1}}{x_n(1+x_n)} = \liminf \frac{x_{n+1}}{x_n^2} 
\)).
\Weakc*

\Weakcthreshold*
\begin{proof}
We prove the contrapositive, using Theorem~\ref{thm:Strongc}: assume that
\[
\sum_{n=1}^{+\infty}\left(x_n \prod_{r=1}^{n-1} \frac{1}{1 + x_r} \right) < \infty.
\]
We prove that there exists \(a>1\) such that for all \(k\ge1\),
\[
x_k \le a^{2^k}.
\]

Since the series converges, there is \(k_0\ge1\) such that
\[
x_k \le \prod_{r=1}^{k-1}(1+x_r)\qquad\text{for all }k\ge k_0.
\]
Choose \(a\ge \sqrt{2}\) large enough so that \(x_k \le a^{2^k}\) for all \(k<k_0\). We prove by induction on \(k \ge k_0\) that \(x_k \le a^{2^k}\).

Assume \(x_r \le a^{2^r}\) for all \(r<k\). Then
\[
x_k \le \prod_{r=1}^{k-1}(1+x_r) \le \prod_{r=1}^{k-1}(1+a^{2^r}).
\]
Using the identity
\begin{equation}
\label{eq:product_identity}
\prod_{r=0}^{k-1} (1 + a^{2^r}) = \frac{a^{2^k} - 1}{a-1},
\end{equation}
we get
\[
\prod_{r=1}^{k-1} (1 + a^{2^r}) = \frac{1}{1+a} \prod_{r=0}^{k-1} (1 + a^{2^r}) = \frac{a^{2^k}-1}{(a-1)(1+a)}.
\]
Since \(a>\sqrt{2}\), we have \((a-1)(1+a) = a^2-1 > 1\), hence
\[
\frac{a^{2^k}-1}{(a-1)(1+a)} \le a^{2^k}.
\]
This proves \(x_k \le a^{2^k}\) and completes the induction.

\end{proof}

\textbf{Example~\ref{ex:critical}:}
\begin{itemize}
  \item If \( x_k \sim a^{b^k} \), with \( a > 1 \) and \( b > 2 \), then:
  \[
  \frac{x_{k+1}}{x_k^2} \sim a^{b^k(b - 2)} \to \infty.
  \]
  This results can also be recovered by Corollary~\ref{Weakcthreshold} and more generally, if $x_k \geq a^{b^k}$ from some index onward, then the system is critical.

  \item If \( x_k \sim \lambda a^{2^k} \), with \( a > 1 \) and \( \lambda < 1 \), then:
  \[
  \frac{x_{k+1}}{x_k^2} \to \frac{1}{\lambda} > 1.
  \]

\item Finally, assume 
\[
x_k = a^{2^k}(1 + \varepsilon_k) \quad \text{with} \quad \sum_{k=1}^\infty |\varepsilon_k| < \infty.
\]
The d'Alembert ratio test does not suffice, so we use Theorem~\ref{thm:Strongc}.

First, note that
\[
x_k \le a^{2^k}(1 + |\varepsilon_k|), \quad \text{hence} \quad 1 + x_k \le (1 + a^{2^k})(1 + |\varepsilon_k|).
\]

Using identity~\eqref{eq:product_identity}, we obtain
\[
\prod_{r=1}^{n-1} (1 + x_r) \le \prod_{r=1}^{n-1} (1 + a^{2^r}) \prod_{r=1}^{n-1} (1 + |\varepsilon_r|) 
= \frac{a^{2^n}-1}{(a-1)(1+a)} \prod_{r=1}^{n-1} (1 + |\varepsilon_r|).
\]

Since \(1 + |\varepsilon_r| \le e^{|\varepsilon_r|}\), it follows that
\[
\prod_{r=1}^{n-1} (1 + x_r) \le \frac{a^{2^n}}{a^2-1} \, e^{\sum_{r=0}^\infty |\varepsilon_r|}.
\]

Taking the reciprocal of the product, and multiplying by $x_n = a^{2^n}(1+\eps_n)$ gives
\[
x_n \prod_{r=1}^{n-1} \frac{1}{1 + x_r} \ge (a^2-1)(1 + \varepsilon_n) e^{-\sum_{r=0}^\infty |\varepsilon_r|}.
\]

This ensures
\[
\sum_{n=1}^\infty \left(x_n \prod_{r=1}^{n-1} \frac{1}{1 + x_r}\right) = \infty.
\]

\end{itemize}

\section*{Acknowledgments}
This work was carried out as part of an internship under the supervision of Ellen Saada. I am deeply grateful to her for her continuous guidance, support, and careful reading throughout the preparation of this work. I would also like to thank Frank Redig and Berend van Tol for meeting with me and for the valuable discussions we had about the problem.

\printbibliography

\end{document}